\def\({\left(}
\def\){\right)}
\def\leq{\leqslant}
\def\geq{\geqslant}
\newcommand{\T}{\widetilde{T}}
\newcommand{\N}{\mathbb{N}}
\newcommand{\tr}{\widehat{T}}
\newcommand{\xr}{\widehat{X}}
\newcommand{\og}{\mathfrak{O}_G}
\newcommand{\ot}{\mathfrak{O}_T}
\newcommand{\CreatingClosedOrbits}{
\begin{tikzpicture}
\filldraw (0,0) circle[radius=0.05];
\filldraw (.5,-.5) circle[radius=0.05];
\filldraw (1,0) circle[radius=0.05];
\draw[->,blue] (0,0) [out=270,in=180] to (0.4,-0.5);
\draw[->,blue] (0.5,-0.5) [out=350,in=260] to (1,-0.1);
\draw[blue] (1,0) [out=90,in=360] to (0.5,0.5);
\draw[->,blue] (0.5,0.5) [out=180,in=90] to (0,0.1);
\filldraw (0,1.5) circle[radius=0.05];
\filldraw (0,2.5) circle[radius=0.05];
\filldraw (1,2) circle[radius=0.05];
\filldraw (0.5,1.6) circle[radius=0.05];
\draw[->, blue] (0,1.5) [out=0,in=220] to (0.425,1.6);
\draw[->, blue] (0.5,1.6) [out=30,in=230] to (0.95,1.95);
\draw[->, blue] (1,2) [out=120,in=360] to (0.1,2.5);
\draw[->] (0,2.5) -- (0,0.1);
\draw[->] (0.5,1.6) -- (0.5,-0.4);
\draw[->] (1,2) -- (1,0.1);
\node[right, blue] at (1.25,2) {$ T $};
\node[right, blue] at (1.25,0.25) {$ T' $};
\end{tikzpicture}}
\newcommand{\SquashingClosedOrbits}{
\begin{tikzpicture}
\filldraw (0,0.5) circle[radius=0.05];
\filldraw (0,1.5) circle[radius=0.05];
\filldraw (0,2.1) circle[radius=0.05];
\filldraw (0,2.7) circle[radius=0.05];
\filldraw (0,3.3) circle[radius=0.05];
\draw[->,blue] (0,1.5) [out=45,in=315] to (0.05,2.15);
\draw[->,blue] (0,2.1) [out=45,in=315] to (0.05,2.65);
\draw[->,blue] (0,2.7) [out=45,in=315] to (0.05,3.25);
\draw[->,blue] (0,3.3) [out=210,in=150] to (-0.1,1.5);
\draw[->] (0,1.5) -- (0,0.6);
\draw[blue] (0,0.5) [out=180,in=90] to (-0.25,0.25);
\draw[blue] (-0.25,0.25) [out=270,in=180] to (0,0);
\draw[blue] (0,0) [out=360,in=270] to (0.25,0.25);
\draw[->,blue] (0.25,0.25) [out=90,in=360] to (0.075,0.5);
\node[right,blue] at (0.5,2.4) {$ T $};
\node[right,blue] at (0.5,0.5) {$ T' $};
\end{tikzpicture}
}
\newtheorem{theorem}{Theorem}
\newtheorem{lemma}[theorem]{Lemma}
\newtheorem{corollary}[theorem]{Corollary}
\newtheorem{proposition}[theorem]{Proposition}
\theoremstyle{definition}
\newtheorem{example}[theorem]{Example}
\newtheorem{remark}[theorem]{Remark}
\begin{document}

\title{Closed orbits in quotient systems}
\author{Stefanie Zegowitz}

\maketitle

\begin{abstract}
\noindent We study the relationship between pairs of topological dynamical systems $ (X,T) $ and $ (X',T') $ where $ (X',T') $ is the quotient of $ (X,T) $ under the action of a finite group $ G $. We describe three phenomena concerning the behaviour of closed orbits in the quotient system, and the constraints given by these phenomena. We find upper and lower bounds for the extremal behaviour of closed orbits in the quotient system in terms of properties of $ G $ and show that any growth rate in between these bounds can be achieved. 
\end{abstract}

\section{Introduction}
For a continuous map $ T:X\to X $, a closed orbit of length $ n $ is any set of the form 
\begin{equation*} 
\mathfrak{O}_T(x)= \{ x, T(x), T^2(x), T^3(x), ... , T^{n}(x)=x \}\,,
\end{equation*}
with cardinality $ |\ot(x)|=n $, for $ n $ a natural number.

The study of closed orbits is useful for understanding the growth properties of the dynamical system $ (X,T) $, and useful analogies have been drawn between the study of closed orbits of $ (X,T) $ and the study of prime numbers. Dynamical analogues of the Prime Number Theorem concern the asymptotic behaviour of quantities like
\begin{equation}
\label{asymptote}
\pi_T(n)= \# \{ \ot(x): |\mathfrak{O}_T(x)|\leq n  \}\,.
\end{equation}
Waddington proves in \cite{Waddington} that, for $ T $ an ergodic automorphism of a torus, we have that
\begin{equation*}
\pi_T(n)\sim \frac{e^{h(T)(n+1)}}{n} \rho(n) \quad\text{as } n\to\infty\,,
\end{equation*}
where $ \rho:\N\to\mathbb{R}^+ $ is an explicit almost periodic function which is bounded away from zero and infinity ($ \rho $ is constant for an expansive automorphism), where $ h(T) $ denotes the topological entropy of $ T $. Dynamical analogues of Mertens' Theorem concern the asymptotic behaviour of quantities like
\begin{equation}
\label{Mertens}
M_T(n)= \sum\limits_{|\mathfrak{O}_T(x)|\leq n} \frac{1}{e^{h(T)|\ot(x)|}}.
\end{equation}
In \cite{Mertens} and \cite{Noorani}, it is shown that, for $ T $ an ergodic quasi-hyperbolic toral automorphism, we have
\begin{equation*}
M_T(n)\sim C\log(n)\quad\text{as }n\to\infty\,.
\end{equation*}


General statements regarding the growth properties of both \eqref{asymptote} and \eqref{Mertens} in flows are proved by Parry in \cite{Parry}, by Parry and Pollicott in \cite{ParryPollicott}, and by Sharp in \cite{SharpMertens}. In \cite{Sharp}, Sharp obtains results for the asymptotic behaviour of (\ref{asymptote}) in the context of finite group extensions of Axiom A flows. Assuming that $ T:\mathbb{R}\times M\to M $ is an Axiom A flow, where $ M $ is a compact smooth Riemannian manifold, Sharp considers a finite group $ G $ acting freely on $ M $ and commuting with $ T $. Here, $ T $ induces an Axiom A flow $ T' $ on the quotient space $ M'= G\backslash M $, and we obtain a topological semi-conjugacy between $ T $ and $ T' $, that is we have a continuous surjection $ f:M\to M' $ with $ T\circ f=f\circ T' $. Each closed orbit $ \mathfrak{O}_{T'}(x) $ gives rise to a unique conjugacy class in $ G $, and we denote this conjugacy class by $ \mathcal{C}_{\mathfrak{O}_{T'}(x)} $. A special case of the expression which Sharp studies is the quantity
\begin{flalign}
\label{Sharp}
 \pi_{\mathcal{C}}(n)= \#\{ \mathfrak{O}_{T'}(x): |\mathfrak{O}_{T'}(x)|\leq n , \mathcal{C}_{\mathfrak{O}_{T'}(x)}= \mathcal{C} \}\,,
\end{flalign} 
when $ T' $ is topologically weak-mixing, and he obtains the result that
\begin{equation*}
\pi_{\mathcal{C}}(n) \sim \frac{|\mathcal{C}|}{|G|}\,\pi_{T'}(n)\quad \text{as $ n\to\infty $}\,.
\end{equation*}


The above results for flows were motivational in studying a discrete analogue. We take $ (X,d) $ to be a compact metric space and $ T:X\to X $ to be a homeomorphism, that is $ (X,T) $ is a \textbf{topological dynamical system}. We consider a finite group $ G $ acting on $ X $, where the action of $ G $ commutes with $ T $. We do not require the action to be free. We denote by $ X'=G\backslash X $ the quotient space and by $ T':X'\to X' $ the induced map on the quotient space. We call the system $ (X',T') $ the quotient system of the topological dynamical system $ (X,T) $, and we have defined a topological semi-conjugacy between $ T $ and $ T' $ given by the quotient map itself. Note that this construction is an extension of the $ C_2 $ case analysed in \cite{Halving} by Stevens, Ward, and Zegowitz. 

Write
\[
\mathcal{F}_n(T)= \{x\in X: T^n(x)=x\} 
\]
for the set of points of period $ n $ under $ T $ and $ F_n(T)=\# \mathcal{F}_n(T) $ for the number of points of period $ n $. Similarly, write
\[
\mathcal{O}_n(T)= \{\ot(x): |\ot(x)|=n\}
\]
for the set of closed orbits of length $ n $ under $ T $ and $ O_n(T) =\# \mathcal{O}_n(T) $ for the number of closed orbits of length $ n $. Note that for two topologically semi-conjugate maps $ T $ and $ T' $ there is, in general, no relationship between the count of periodic points (and closed orbits) of $ T $ and $ T' $. For example, it is possible for $ T $ to have no closed orbits while $ T' $ has many closed orbits as illustrated in Figure \ref{IntroFigure1}. On the other hand, it is possible for $ T $ to have many closed orbits while $ T' $ has few as illustrated in Figure \ref{IntroFigure2}.
\begin{figure}[ht]
\begin{minipage}[b]{0.45\linewidth}
\centering
\CreatingClosedOrbits
\caption{Creating closed orbits from non-closed orbits}
\label{IntroFigure1}
\end{minipage}
\begin{minipage}[b]{0.1\linewidth}
\centering
\qquad
\end{minipage}
\begin{minipage}[b]{0.43\linewidth}
\centering
\SquashingClosedOrbits
\caption{Squashing closed orbits to a single closed orbit of length $ 1 $ (a fixed point)}
\label{IntroFigure2}
\end{minipage}
\end{figure}
However, if we consider two topologically semi-conjugate maps $ T $ and $ T' $, where $ (X',T') $ is the quotient system of the dynamical system $ (X,T) $ under the action of a finite group $ G $, it is then possible to establish a relationship between the count of periodic points (and closed orbits) of $ T $ and $ T' $. 

\begin{example}
\label{ExampleCircleDoublingMap}
Let $ T:\mathbb{R}^2/\mathbb{Z}^2 \to \mathbb{R}^2/\mathbb{Z}^2 $ be the doubling map defined by
\begin{equation*}
T(x,y)= (2x,2y)\quad(\text{mod }1)\,,
\end{equation*}
for all $ (x,y)\in X= \mathbb{R}^2/\mathbb{Z}^2 $, and let $ D_8 $ be the dihedral group of eight elements acting on $ X $ as the symmetries of the square $ [0,1]^2 $, so that the action commutes with $ T $. Then \cite{Zegowitz} shows that
\[
F_n(T)=(2^n-1)^2 \quad\text{and}\quad F_n(\tr)=4^n , 
\]
for all $ n\geq 1 $, where $ \tr $ is the induced map on the quotient space. It follows that
\[
F_n(T)\sim F_n(\tr)\quad\text{as $ n\to\infty $},
\]
showing the same asymptotic growth rates for closed orbits of $ (X,T) $ and its quotient system.
\end{example}

We find that Example \ref{ExampleCircleDoublingMap} is not representative in a general setting where orbit behaviour can be much more complicated, but nevertheless we are able to analyse the possible behaviours by partitioning the space $ X $ according to the action of the group $ G $. In particular, there are two numbers  $ \nabla $ and $ \Theta $ associated to the group $ G $ which determine the extremal orbit behaviour as follows: 

\begin{theorem}
\label{MainTheorem}
Let $ (a_n) $ be a sequence of non-negative integers with $ a_1\geq 1 $ such that there exists $ N>0 $ with $ \frac{a_{\nabla n}}{\Theta} \geq 	a_n $ for $ n\geq N $. Further, let $ (b_n) $ be any sequence of non-negative integers such that $ b_1> \frac{a_1}{|G|} $ and
\[
\begin{cases}
\frac{a_n}{|G|}\leq b_n \leq a_n, & \text{for $ n<N $}, \\
a_n\leq b_n \leq \frac{a_{\nabla n}}{\Theta}, & \text{for $ n\geq N $}.
\end{cases}
\]
Then there exist a topological dynamical system $ (X,T) $ and an action of $ G $ on $ X $ which commutes with $ T $ such that
\[
O_n(T)=a_n \qquad\text{and}\qquad O_n(T')=b_n,
\]
for all $ n\geq 1 $.
\end{theorem}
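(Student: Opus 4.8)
The plan is to build $(X,T)$ together with its $G$-action as a disjoint union, assembled $G$-equivariantly, of the elementary ``building blocks'' isolated in the preceding analysis of the three phenomena, taken with suitable multiplicities. A building block is a finite set carrying a cyclic $T$-action and a commuting $G$-action whose effect on both $O_\bullet(T)$ and $O_\bullet(T')$ is completely transparent: for a subgroup $H\le G$ admitting a cyclic quotient $H/K\cong\Z/\ell$, the block consisting of $|G:H|$ copies of a $T$-orbit of length $\ell n$, freely permuted by $G$ with point stabiliser $K$ and with $H/K$ acting as rotation by $n$, contributes $|G:H|$ to $O_{\ell n}(T)$ and exactly $1$ to $O_n(T')$. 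The two degenerate cases with $\ell=1$ — a ``pass-through'' orbit, giving $(+1,+1)$ at one length, and a ``merging'' block, giving $(+i,+1)$ at one length for $i$ a subgroup index — together with the extremal shrinking block realising $(\Theta,1)$ at the pair of lengths $(\nabla n,n)$, carry most of the weight.

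I would then reformulate the theorem as a bookkeeping statement: it suffices to choose, for every length and every available block type, a non-negative integer multiplicity so that the totals at each length $m$ are $a_m$ for $O_m(T)$ and $b_m$ for $O_m(T')$. Since blocks interact only through the $T$-orbit lengths they occupy, this decouples once one fixes a ``donation scheme'' recording how many of the $a_{\ell m}$ orbits of each longer length $\ell m$ are shortened down to feed $O_m(T')$ rather than $O_{\ell m}(T')$. In the regime $n<N$, where $a_n/|G|\le b_n\le a_n$, I would realise the $b_n$ required quotient orbits at length $n$ from the $a_n$ orbits of length $n$ alone, using pass-through orbits and merging blocks (and the freedom in the choice of $H$); the strict inequality $b_1>a_1/|G|$ together with $a_1\ge 1$ supplies the base case at length one, where no shorter length is available to receive donations. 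In the regime $n\ge N$, where $a_n\le b_n\le a_{\nabla n}/\Theta$, I would let the $a_n$ orbits of length $n$ pass through, accounting for the first $a_n$ of the $b_n$ needed quotient orbits, and produce the remaining $b_n-a_n$ by donating $\Theta(b_n-a_n)\le a_{\nabla n}$ orbits of length $\nabla n$ through the extremal shrinking block; the growth hypothesis $a_{\nabla n}/\Theta\ge a_n$ is exactly what guarantees, running up the chain $n,\nabla n,\nabla^2 n,\dots$, that after each round of donations enough orbits survive at the next length to still meet its own count.

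The space so obtained is a countable disjoint union of finite sets and hence not compact, so I would realise the whole family of blocks inside a single compact metrisable space — e.g.\ as a closed, $T$- and $G$-invariant subset of a Cantor set, placing the $k$-th block (for a fixed enumeration) at the $k$-th level of a nested family of clopen sets, or equivalently by adjoining a point at infinity fixed by $T$ and by $G$ and compensating with one fewer pass-through fixed point. Checking that $T$ is then a homeomorphism commuting with the $G$-action is routine from the block-wise description. I expect the real difficulty to lie in the bookkeeping of the second paragraph: showing that a single donation scheme can be chosen to satisfy all the length-wise constraints simultaneously — in particular that removing orbits from length $\nabla n$ to feed length $n$ stays compatible with the requirements $O_{\nabla n}(T)=a_{\nabla n}$ and, for $\nabla n\ge N$, with $b_{\nabla n}\ge a_{\nabla n}$ one level up — and that the transition at $n=N$ between the contracting regime ($b_n\le a_n$) and the expanding regime ($b_n\ge a_n$) is negotiated without creating closed orbits of unwanted intermediate lengths.
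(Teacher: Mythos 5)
Your building blocks and the compactification are exactly those of Proposition \ref{SuperBasicLemma}: a block with point stabiliser $K$, shortening factor $\ell$ and $[G:K]/\ell$ glued orbits is the piece $X^{[K]_\sigma}_{\ell n}$ with $\sigma=(\ell,[G:K]/\ell)$, and your point at infinity is a surviving fixed point, which is precisely what the hypothesis $b_1>\tfrac{a_1}{|G|}$ guarantees. So the reduction is the paper's; what remains --- choosing the multiplicities so that every length sees the right totals --- is exactly the content of Theorem \ref{MainTheorem} beyond Proposition \ref{SuperBasicLemma}, and there your argument has a genuine gap. The donation scheme you actually write down for $n\geq N$ is inconsistent: you let all $a_n$ orbits of length $n$ pass through, while at the same time, whenever $n=\nabla m$ with $m\geq N$, you consume $\Theta(b_m-a_m)$ orbits of length $n$ in shrinking blocks; the same orbits cannot do both. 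With that scheme the quotient count at such a length $n$ comes out as $b_n-\Theta\,(b_{n/\nabla}-a_{n/\nabla})$ rather than $b_n$ (or else $O_n(T)$ exceeds $a_n$). You then say you expect the simultaneous satisfiability of all the length-wise constraints to be ``the real difficulty'' and leave it open --- but that is the step a proof must supply.

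The repair is a recursion up each chain $n,\nabla n,\nabla^2 n,\dots$, and it is what the paper's proof (Theorem \ref{SequencesProposition}) consists of: let $d_m$ be the number of quotient orbits of length $m$ produced by shrinking blocks, set $d_m=0$ for $m<N$, declare the number of surviving orbits at level $m\geq N$ to be $s_m=a_m-\Theta\,d_{m/\nabla}$ (with $d_{m/\nabla}=0$ if $\nabla\nmid m$ or $m/\nabla<N$), and put $d_m=b_m-s_m$; then $s_m\geq 0$ because $d_{m/\nabla}\leq b_{m/\nabla}\leq a_m/\Theta$, and $d_m\geq 0$ because $b_m\geq a_m\geq s_m$. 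This is the paper's choice $b^{[G]_{(1,1)}}_m=a_m-\Theta\, b^{[H_\nabla]_{(\nabla,\Theta)}}_{m/\nabla}$ and $b^{[H_\nabla]_{(\nabla,\Theta)}}_m=b_m-b^{[G]_{(1,1)}}_m$, which is then fed into Proposition \ref{SuperBasicLemma}. A secondary gap is the regime $n<N$: you invoke ``merging blocks (and the freedom in the choice of $H$)'', but the deficits $a_n-b_n$ reachable by gluing alone at a single length are sums of terms $[G:H]-1$ over proper subgroups $H$, so which $b_n$ in $\bigl[\tfrac{a_n}{|G|},a_n\bigr]$ are attainable depends on the subgroup indices available (for $G$ cyclic of prime order the only step size is $|G|-1$); you need to commit to an explicit choice and verify that both totals balance exactly --- this is also the delicate point in the paper's own treatment of that case, which uses only $|G|$-fold merges.
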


Section 2 gives background information on quotient systems. Section 3 describes the three orbit behaviour phenomena which occur in quotient systems-- surviving orbits, glueing orbits, and shortening orbits-- and establishes upper and lower bounds for closed orbits in quotient systems. Section 4 introduces the Super Basic Lemma, and Section 5 analyses growth rates for closed orbits in quotient systems and contains the proof for Theorem \ref{MainTheorem}.

Please note that throughout this paper, when we refer to orbits of the dynamical system $ (X,T) $, we refer to closed orbits.

\section{Quotient Systems}

Suppose $ (X,d) $ is a compact metric space, $ (X,T) $ is a topological dynamical system with $ F_n(T)<\infty $ for all $ n\geq 1 $, and $ G $ is a finite group acting continuously on $ X $ where the action of $ G $ commutes with $ T $. 

Then the orbit of $ x\in X $ under the action of $ G $ is defined by 
\begin{equation*} 
\mathfrak{O}_G(x)= \{ g(x): g\in G\},
\end{equation*}
and the set of all orbits of $ X $ under the action of $ G $, that is the \textbf{quotient space}, is defined by
\begin{equation*}
X'=G\backslash X= \{ \mathfrak{O}_G(x): x\in X\}.
\end{equation*}
We define $ \pi: X\to X' $ to be the canonical map $ \pi(x)=\mathfrak{O}_G(x) $, for all $ x\in X $, and note that $ \pi $ is surjective and, since $ G $ is finite, all orbits are finite. In particular, $ \pi $ induces a surjective map
\begin{equation}
\label{orbitsurjectivity}
\bigsqcup_{n\geq 1} \mathcal{O}_n(T) \to \bigsqcup_{n\geq 1} \mathcal{O}_n(T').
\end{equation}
To guarantee the continuity of $ \pi $, we define a metric $ d' $ on $ X' $ by
\begin{equation*}
d'(\mathfrak{O}_G(x),\mathfrak{O}_G(y)) =\min\{ d(x,y): x\in \og(x), y\in \og(y) \},
\end{equation*}
for all $ x\in X $. It is easily verifiable that $ d' $ is a metric and that $ \pi $ is continuous with respect to $ d' $, but the reader may refer to \cite{Zegowitz} for a proof. 

Further, we define the induced map $ T':X'\to X' $ on the quotient space to be
\[
T'(\mathfrak{O}_G(x))=\mathfrak{O}_G(T(x)) .
\] 
Then $ \pi(T(x))=\mathfrak{O}_G(T(x))= T' (\mathfrak{O}_G(x))= T'(\pi(x)) $, and $ T $ and $ T' $ are topologically semi-conjugate. We call $ \pi $ a \textbf{topological factor map} and $ (X',T') $ the \textbf{quotient system} of $ (X,T) $. Note that $ T' $ is well defined since if $ \mathfrak{O}_G(x)=\mathfrak{O}_G(y) $ then $ T'(\mathfrak{O}_G(x))= T'(\mathfrak{O}_G(y)) $. This holds since $ g $ commutes with $ T $. Again, it is easily verifiable that $ T' $ is continuous with respect to $ d' $, but the reader may refer to \cite{Zegowitz} for a proof. Also note that since $ G $ is finite, we have that $ (X',T') $ is a topological dynamical system.

\begin{example}
\label{ExampleTriangleMap}
Let $ T $ be the doubling map defined on the two-torus $ X=\mathbb{R}^2/\mathbb{Z}^2 $, and let $ D_8 $ be the dihedral group of eight elements acting on $ X $ as the symmetries of the square $ [0,1]^2 $, so that the action commutes with $ T $. The induced map on the quotient space, which can be identified with $ \xr =\{ (x,y): 0\leq y\leq x\leq \frac{1}{2} \} $ as illustrated in Figure \ref{TriangleQuotientSpace}, is given by the triangle map $ \tr $ defined by
\[
\tr(x,y)=
\begin{cases}
(2x,2y)\,, & \text{if }  0\leq y\leq x\leq \tfrac{1}{2} , \\
(1-2x,2y)\,, & \text{if }  \tfrac{1}{4}\leq x\leq\tfrac{1}{2}, 0\leq y\leq \tfrac{1}{4}\text{ and } y\leq \tfrac{1}{2}-x  , \\
(2y,1-2x)\,, & \text{if } \tfrac{1}{4}\leq x\leq\tfrac{1}{2}, 0\leq y\leq \tfrac{1}{4}\text{ and } y\geq \tfrac{1}{2}-x , \\
(1-2y,1-2x)\,, & \text{if }  \tfrac{1}{4}\leq y\leq x\leq \tfrac{1}{2}.
\end{cases}
\]

\begin{figure}
\begin{center}
\begin{tikzpicture}
\draw (0,0) -- (0,2);
\draw (0,2) -- (2,2);
\draw (2,2) -- (2,0);
\draw (2,0) -- (0,0);
\draw (-0.1,0.975)-- (0.1,0.975);
\draw (-0.1,1.025)-- (0.1,1.025);
\draw (1.9,0.975)-- (2.1,0.975);
\draw (2.1,1.025)-- (1.9,1.025);
\draw (0.975,-0.1)-- (0.975,0.1);
\draw (1.025,-0.1)-- (1.025,0.1);
\draw (0.975,1.9)-- (0.975,2.1);
\draw (1.025,1.9)-- (1.025,2.1);
\node[below] at (0.1,0) {$ 0 $};
\node[below] at (2,0) {$ 1 $};
\draw[dashed, thick, blue] (-0.25,-0.25) -- (2.25,2.25);
\draw[<->, blue] (-0.75,0.25) [out=270,in=180] to (0.25,-0.75);
\draw[->, thick] (2.25,0.5) -- (2.75,0.5);
\draw (3,0) -- (5,2);
\draw (5,2) -- (5,0);
\draw (5,0) -- (3,0);
\draw (4.9,0.975)-- (5.1,0.975);
\draw (4.9,1.025)-- (5.1,1.025);
\draw (3.975,-0.1)-- (3.975,0.1);
\draw (4.025,-0.1)-- (4.025,0.1);
\node[below] at (3,0) {$ 0 $};
\node[below] at (4.9,0) {$ 1 $};
\draw[dashed, thick, blue] (3.75,1.25) -- (5.25,-0.25);
\draw[<->, blue] (3.25,0.75) [out=90,in=180] to (4.25,1.75);
\draw[->, thick] (5.25,0.5) -- (5.75,0.5);
\draw (6,0) -- (7,1);
\draw (7,1) -- (8,0);
\draw (8,0) -- (6,0);
\node[below] at (6,0) {$ 0 $};
\node[below] at (8,0) {$ 1 $};
\node[left] at (7,0.35) {$ \frac{1}{2} $};
\draw[dashed, thick, blue] (7,-0.25) -- (7,1.3);
\draw[<->, blue] (6.25,1.1) [out=60,in=120] to (7.75,1.1);
\draw[->, thick] (8.25,0.5) -- (8.75,0.5);
\draw (9,0)-- (10,1);
\draw (10,1)-- (10,0);
\draw (10,0) -- (9,0);
\node[below] at (9,0) {$ 0 $};
\node[below] at (10,0) {$ \frac{1}{2} $};
\end{tikzpicture}
\end{center}
\caption{}
\label{TriangleQuotientSpace}
\end{figure}
\end{example}

\section{Surviving, Glueing and Shortening Orbits}

We have that three phenomena occur in quotient systems:
orbits which remain unchanged (surviving orbits), orbits which glue together with other orbits of the same length to form into one orbit (glueing orbits), and orbits which shorten in length (shortening orbits). Note that it is possible for the two phenomena, glueing and shortening, to occur at the same time. We have the following two lemmas concerning the three phenomena:


\begin{lemma}
\label{shortening}
Let $ T:X\to X $ be a map defined on a set $ X $. Let $ G $ be a finite group acting on $ X $ where the action of $ G $ commutes with $ T $ and let $ k\in\N $. Then the orbit $ \mathfrak{O}_T(x)$  of $ x\in X $ shortens in length by a factor of $ \frac{1}{k} $ if and only if $ |\mathfrak{O}_T(x)\cap\mathfrak{O}_G(x)|=k $. Moreover, if $ \mathfrak{O}_T(x) $ shortens in length by a factor of $ \frac{1}{k} $, then there exists an element $ g\in G $ such that $ \mathfrak{O}_T(x)\cap \mathfrak{O}_G(x)= \{g^i(x): i\in\N_0 \}= \mathfrak{O}_{\left\langle g\right\rangle}(x) $.
\end{lemma}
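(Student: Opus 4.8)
The plan is to translate ``shortening by a factor of $\tfrac1k$'' into the statement that $k$ equals the ratio of the $T$-period of $x$ to the $T'$-period of $\pi(x)$, and then to identify the set $\ot(x)\cap\og(x)$ explicitly as a single cyclic orbit.

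First I would fix notation. Write $n=|\ot(x)|$, so that $T^n(x)=x$ while $T^i(x)\ne x$ for $0<i<n$, and let $m$ be the least positive integer with $T^m(x)\in\og(x)$, equivalently with $\pi(T^m(x))=\pi(x)$. Since $\pi\circ T=T'\circ\pi$, the image $\pi(\ot(x))$ is exactly the closed orbit $\mathfrak{O}_{T'}(\pi(x))$, of length $m$, and from $\pi(T^n(x))=\pi(x)$ we get $m\mid n$. By definition, $\ot(x)$ shortens by a factor of $\tfrac1k$ precisely when $m=n/k$. So it suffices to show that $|\ot(x)\cap\og(x)|=n/m$, together with the asserted description of the intersection.

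Next I would compute the intersection. Pick $g\in G$ with $T^m(x)=g(x)$, which exists by the choice of $m$. Using that the $G$-action commutes with $T$, a one-line induction gives $T^{jm}(x)=g^j(x)$ for all $j\ge 0$; in particular $g^{n/m}(x)=T^n(x)=x$, so $\{g^i(x):i\in\N_0\}=\{T^{jm}(x):0\le j<n/m\}$, and these $n/m$ points are pairwise distinct because $0,m,\dots,(n/m-1)m$ are distinct elements of $\{0,\dots,n-1\}$ while $T^0(x),\dots,T^{n-1}(x)$ are distinct. Every one of them lies in $\ot(x)\cap\og(x)$, so $\mathfrak{O}_{\langle g\rangle}(x)=\{g^i(x):i\in\N_0\}\subseteq\ot(x)\cap\og(x)$. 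For the reverse inclusion, take $y\in\ot(x)\cap\og(x)$ and write $y=T^i(x)$ with $0\le i<n$; from $y\in\og(x)$ we get $\pi(T^i(x))=\pi(x)$, i.e.\ $T'^i(\pi(x))=\pi(x)$, and since $\pi(x)$ has $T'$-period $m$ this forces $m\mid i$, so $y=T^{jm}(x)=g^j(x)$ for some $0\le j<n/m$. Hence $\ot(x)\cap\og(x)=\mathfrak{O}_{\langle g\rangle}(x)$ has exactly $n/m$ elements, and combining this with the previous paragraph yields both directions of the equivalence as well as the ``moreover'' statement.

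I do not expect a serious obstacle: the argument is essentially bookkeeping. The step most prone to error is the first one — correctly matching ``shortening by a factor $\tfrac1k$'' with the relation $k=|\ot(x)|/|\mathfrak{O}_{T'}(\pi(x))|$, and recognising that the length of the image orbit is precisely the $T'$-period of $\pi(x)$ — after which the group-theoretic identification of the intersection is immediate from the commutation relation.
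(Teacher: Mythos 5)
Your proof is correct and follows essentially the same route as the paper: identify the minimal $m$ with $T^m(x)\in\mathfrak{O}_G(x)$ as the quotient orbit length, use commutation to get $T^{jm}(x)=g^j(x)$, and use minimality/divisibility to show the intersection is exactly $\mathfrak{O}_{\left\langle g\right\rangle}(x)$ of size $n/m$. The only difference is organisational — you prove the single identity $|\mathfrak{O}_T(x)\cap\mathfrak{O}_G(x)|=n/m$ once and read off both implications, whereas the paper argues the two directions separately — which is a harmless (indeed slightly tidier) repackaging of the same argument.
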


\begin{proof}
($ \Rightarrow $) Suppose $ \mathfrak{O}_T(x)$ shortens in length by a factor of $ \frac{1}{k} $. Then $ |\mathfrak{O}_T(x)|= km $, for some $ m $, and $ |\mathfrak{O}_{T'}(x)|=m $. Then $ T^m(x)=g(x) $, for some $ g\in G $, where $ m $ is the least integer for which there is such a $ g $. It follows that 
\begin{equation*}
\{x, T^m(x), T^{2m}(x), \ldots ,T^{(k-1)m}(x) \}\subseteq \left[\mathfrak{O}_T(x)\cap\mathfrak{O}_G(x)\right]\,.
\end{equation*}
Now, suppose there is $ j $ such that $ T^j(x) \in \left[\mathfrak{O}_T(x)\cap\mathfrak{O}_G(x)\right] $. Then $ T^j(x)=g'(x) $, for some $ g'\in G $. If $ m \nmid j $, then $ j=lm+s $, where $ 0<s<m $. But then $ T^s(x)=g(g')^{-l}(x) $, and this contradicts the minimality of $ m $. Hence, $ m\mid j $, and $ g'(x)=g^i(x) $, for some $ i $. The result follows.

($ \Leftarrow $) Suppose $ |\og(x)\cap \ot(x)|=k $. There exists $ m $ such that $ T^m(x)=g(x) $, for some $ g\in G $. Assume $ m $ to be minimal with this property. Then 
\begin{equation*}
\{ x, T^m(x), T^{2m}(x), \ldots ,T^{(k-1)m}(x), \ldots \} \subseteq \left[\mathfrak{O}_T(x)\cap\mathfrak{O}_G(x)\right]\,. 
\end{equation*}
By a similar argument as for ($ \Rightarrow $), if there exists $ j>m $ such that $ T^j(x) \in \left[\mathfrak{O}_T(x)\cap\mathfrak{O}_G(x)\right] $, then $ m\mid j $ and
\begin{flalign*}
\left[\mathfrak{O}_T(x)\cap\mathfrak{O}_G(x)\right] &= \{ x, T^m(x), T^{2m}(x), \ldots ,T^{(k-1)m}(x), \ldots \} \\
&=\{ x, g(x), g^{2}(x), \ldots ,g^{(n-1)}(x), \ldots \}\,.
\end{flalign*}
Since $ |\mathfrak{O}_T(x)\cap\mathfrak{O}_G(x)|=k $ by assumption, it follows that $ g^k(x)=x $, where $ k $ is minimal. The result follows.
\end{proof}

\begin{remark}
Note that by Lemma \ref{shortening} and the Orbit--Stabilizer--Theorem, if the orbit $ \ot(x) $ shortens in length by a factor of $ \frac{1}{k} $, then there exists $ g\in G $ such that 
\[
k= |\mathfrak{O}_T(x)\cap \mathfrak{O}_G(x)|= |\mathfrak{O}_{\left\langle g\right\rangle}(x)|\leq |\left\langle g\right\rangle |.
\]
Then contrary to the natural assumption that the maximum factor by which the orbit $ \ot(x) $ can shorten in length is given by $ \frac{1}{|G|} $, we find that the maximum factor is determined by the maximal order of an element of $ G $. It follows that an orbit $ \ot(x) $ can shorten in length by a factor of $ \frac{1}{|G|} $ only if there exists $ g\in G $ such that $ |\left\langle g\right\rangle|=|G| $.
\end{remark}


\begin{lemma}
\label{glueing}
Let $ T:X\to X $ be a map defined on a set $ X $. Let $ G $ be a finite group acting on $ X $ where the action of $ G $ commutes with $ T $, and let $ x\in X $. Then the number of orbits that glue to $ \mathfrak{O}_T(x) $ (including itself) is given by  
\begin{equation*}
\frac{|\mathfrak{O}_G(x)|}{|\mathfrak{O}_T(x)\cap\mathfrak{O}_G(x)|}\,.
\end{equation*}
\end{lemma}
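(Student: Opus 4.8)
The plan is to understand precisely which $T$-orbits are identified with $\mathfrak{O}_T(x)$ under the factor map $\pi$, and to count them via the action of $G$ on the set of $T$-orbits meeting $\mathfrak{O}_G(x)$. First I would observe that $G$ acts on the collection of closed $T$-orbits: since $g$ commutes with $T$, for any $g\in G$ the set $g\mathfrak{O}_T(x)=\mathfrak{O}_T(g(x))$ is again a closed $T$-orbit of the same length. Two orbits $\mathfrak{O}_T(y)$ and $\mathfrak{O}_T(z)$ have the same image under $\pi$ if and only if $\mathfrak{O}_G(y)$ and $\mathfrak{O}_G(z)$ meet, which (since $\pi$ is constant on $G$-orbits) happens if and only if $z=g(T^j(y))$ for some $g\in G$ and some $j$, i.e. if and only if $\mathfrak{O}_T(z)=g\mathfrak{O}_T(y)$ for some $g\in G$. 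Hence the set of $T$-orbits glueing to $\mathfrak{O}_T(x)$ is exactly the $G$-orbit of $\mathfrak{O}_T(x)$ in the action of $G$ on closed $T$-orbits, and its cardinality is, by the Orbit--Stabilizer Theorem, $|G|/|\mathrm{Stab}_G(\mathfrak{O}_T(x))|$, where $\mathrm{Stab}_G(\mathfrak{O}_T(x))=\{g\in G: g\mathfrak{O}_T(x)=\mathfrak{O}_T(x)\}$.

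The remaining task is to identify this count with $|\mathfrak{O}_G(x)|/|\mathfrak{O}_T(x)\cap\mathfrak{O}_G(x)|$. I would do this by comparing two $G$-actions on the same finite set, namely $\mathfrak{O}_G(x)$ itself. On one hand, $|\mathfrak{O}_G(x)|=|G|/|\mathrm{Stab}_G(x)|$ by Orbit--Stabilizer, where $\mathrm{Stab}_G(x)=\{g\in G:g(x)=x\}$. On the other hand, I claim $\mathrm{Stab}_G(x)$ is exactly the kernel of the surjection $\mathrm{Stab}_G(\mathfrak{O}_T(x))\twoheadrightarrow \mathfrak{O}_T(x)\cap\mathfrak{O}_G(x)$ given by $g\mapsto g(x)$: if $g$ stabilizes the orbit $\mathfrak{O}_T(x)$ then $g(x)\in\mathfrak{O}_T(x)$, and of course $g(x)\in\mathfrak{O}_G(x)$, so $g(x)\in\mathfrak{O}_T(x)\cap\mathfrak{O}_G(x)$; conversely every point of $\mathfrak{O}_T(x)\cap\mathfrak{O}_G(x)$ is of the form $g(x)$ for some $g\in G$, and such a $g$ automatically stabilizes $\mathfrak{O}_T(x)$ since $g(x)\in\mathfrak{O}_T(x)$ forces $g\mathfrak{O}_T(x)=\mathfrak{O}_T(g(x))=\mathfrak{O}_T(x)$. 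Counting fibres of this surjection gives $|\mathrm{Stab}_G(\mathfrak{O}_T(x))|=|\mathrm{Stab}_G(x)|\cdot|\mathfrak{O}_T(x)\cap\mathfrak{O}_G(x)|$.

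Combining the two displays yields
\[
\frac{|G|}{|\mathrm{Stab}_G(\mathfrak{O}_T(x))|}=\frac{|G|}{|\mathrm{Stab}_G(x)|\cdot|\mathfrak{O}_T(x)\cap\mathfrak{O}_G(x)|}=\frac{|\mathfrak{O}_G(x)|}{|\mathfrak{O}_T(x)\cap\mathfrak{O}_G(x)|},
\]
which is the desired formula. The main obstacle I anticipate is not any single hard step but rather pinning down the definition of "orbits that glue to $\mathfrak{O}_T(x)$" so that it coincides with the $G$-orbit of $\mathfrak{O}_T(x)$ in the action on $T$-orbits — in particular checking carefully that two $T$-orbits have equal image under $\pi$ precisely when they lie in the same $G$-orbit, which uses that the $G$-action commutes with $T$ (so that $g$ maps $T$-orbits to $T$-orbits bijectively) and that $\pi$ separates distinct $G$-orbits. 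Once that identification is made, the rest is a clean double application of Orbit--Stabilizer together with the fibre-counting argument above.
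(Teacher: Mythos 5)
Your proof is correct, but it takes a genuinely different route from the paper's. The paper proves the lemma by leaning on Lemma \ref{shortening}: it identifies $\mathfrak{O}_T(x)\cap\mathfrak{O}_G(x)$ with a cyclic orbit $\mathfrak{O}_{\left\langle g\right\rangle}(x)$ of size $k$, notes that $k$ divides $|\mathfrak{O}_G(x)|=m$, and then concludes that the number of glueing orbits is $m/k$ --- essentially because each glueing orbit meets $\mathfrak{O}_G(x)$ in exactly $k$ points, a step the paper leaves largely implicit. You instead bypass Lemma \ref{shortening} entirely: you let $G$ act on the set of closed $T$-orbits, show that ``glueing to $\mathfrak{O}_T(x)$'' means ``lying in the $G$-orbit of $\mathfrak{O}_T(x)$'' (i.e.\ having the same image under $\pi$), apply Orbit--Stabilizer to that action, and then compare $|\mathrm{Stab}_G(\mathfrak{O}_T(x))|$ with $|G_x|$ by counting the fibres of $g\mapsto g(x)$, which are left cosets of $G_x$, giving $|\mathrm{Stab}_G(\mathfrak{O}_T(x))|=|G_x|\cdot|\mathfrak{O}_T(x)\cap\mathfrak{O}_G(x)|$. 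This is a complete and self-contained argument that makes explicit the counting the paper only gestures at, and it delivers the divisibility of $|\mathfrak{O}_T(x)\cap\mathfrak{O}_G(x)|$ into $|\mathfrak{O}_G(x)|$ as a byproduct without invoking the cyclic description of the intersection; the paper's route is shorter and dovetails with the structure (the element $g$ of order $k$ modulo the stabilizer) that it reuses in Lemma \ref{orbitbehaviour}. One small wording issue: calling $G_x$ the ``kernel'' of the map $\mathrm{Stab}_G(\mathfrak{O}_T(x))\to\mathfrak{O}_T(x)\cap\mathfrak{O}_G(x)$ is an abuse of language, since the target is not a group and the map is not a homomorphism; what you actually use --- that the fibres are the cosets $gG_x$ and hence all have size $|G_x|$ --- is exactly right, so only the terminology, not the argument, needs adjusting. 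Note also that, as throughout the paper, $\mathfrak{O}_T(x)$ is a closed orbit, and your steps $\mathfrak{O}_T(T^j(y))=\mathfrak{O}_T(y)$ and $\mathfrak{O}_T(g(x))=\mathfrak{O}_T(x)$ use this periodicity; it is worth saying so explicitly.
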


\begin{proof}
Set $ |\og(x)|=m $. Then $ |\og(x)\cap \ot(x)|=k $, for some $ 1\leq k\leq m $. By Proposition~\ref{shortening}, it follows that there exists an element $ g\in G $ such that $ |\og(x)\cap\ot(x)|=|\mathfrak{O}_{\left\langle g\right\rangle}(x)|=k $. Then $ k $ divides $ |\og(x)| $ and $ m=kl $, for some $ l $. Hence, the number of glueing orbits is given by $ \frac{m}{k}= \frac{|\og(x)|}{|\og(x)\cap \ot(x)|}=l $.
\end{proof}


\begin{remark}
Note that if $ |\mathfrak{O}_G(x)| =1 $, then $ \ot(x) $ is a surviving orbit-- it neither shortens nor glues.
\end{remark}

To determine what orbit behaviours occur in a particular quotient system $ (X',T') $, we need to partition $ X $ according to the $ G $-conjugacy classes of subgroups of $ G $. We define 
\begin{equation*}
G_x = \{g\in G: g(x)=x \}.
\end{equation*}
to be the \textbf{stabilizer} of $ x $, and 
\begin{equation*}
N_G(H)= \{ g\in G: gHg^{-1} =H \}
\end{equation*}
to be the \textbf{normalizer} of a subgroup $ H\leq G $. Further, by $ P(G) $, we denote the set of all subgroups of $ G $, and by $ \overline{P}(G) $, we denote the set of all $ G $-conjugacy classes of subgroups of $ G $. Then for $ H\in P(G) $, we write 
\begin{equation*}
[H]= \{ gHg^{-1}: g\in G \}
\end{equation*}
for its conjugacy class in $ \overline{P}(G) $. Further, we let $ X_{H} $ be the set
\begin{equation*}
X_{H}= \{x\in X: G_x = H \}
\end{equation*}
and $ X_{[H]} $ be the set
\begin{equation*}
X_{[H]}= \{x\in X: G_x\in [H] \}.
\end{equation*}
Note that if $ g(x)=x $ then $ g(T(x))=T(x) $, for all $ x\in X $ and $ g\in G $, so that $ G_x \subseteq G_{T(x)} $. Moreover, if $ x $ is periodic, then if $ g(T(x))=T(x) $ then $ g(x)=x $. Hence, if we let $ x\in\mathbb{X} $, where $ \mathbb{X} $ is the \textbf{periodic set}, that is $ \mathbb{X} $ is the subset of all periodic points of $ X $ under $ T $, then $ G_x= G_{T(x)} $. Then $ T $ preserves each set $ \mathbb{X}_{[H]}=X_{[H]}\cap\mathbb{X} $, and we have
\begin{equation}
\label{PartitioningPeriodicSet}
\mathbb{X}= \bigsqcup_{[H]\in \overline{P}(G)} \mathbb{X}_{[H]}.
\end{equation}
For the induced map $ T':\mathbb{X}'\to \mathbb{X}' $, the quotient set is partitioned as follows:
 \begin{flalign*}
\mathbb{X}' = \bigsqcup_{[H]\in \overline{P}(G)} G\backslash \mathbb{X}_{[H]}= \bigsqcup_{[H]\in \overline{P}(G)} \mathbb{X}'_{[H]}\,.
\end{flalign*}
Again, $ T' $ preserves each set $ \mathbb{X}'_{[H]} $. Now, for any group $ H $, set
\[
\Delta(H)=\{|\left\langle h\right\rangle|:h\in H\}.
\]
We have the following result concerning orbit behaviour:

\begin{lemma}
\label{orbitbehaviour}
Given the orbit $ \ot(x) $ of $ x\in X_{[H]} $, there exists  $ k\in\Delta\left( N_G(H)/H\right) $ such that 
\begin{itemize}
\item[(i)] $ \ot(x) $ shortens in length by a factor of $ \frac{1}{k} $, and
\item[(ii)] $ \ot(x) $ glues to $ \frac{[G:H]}{k} $ orbits (including itself).
\end{itemize}
\end{lemma}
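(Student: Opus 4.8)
The plan is to combine Lemmas~\ref{shortening} and~\ref{glueing} with an analysis of which group elements can fix a point in $X_{[H]}$ after iteration by $T$. Fix $x\in X_{[H]}$, so (after replacing $H$ by a conjugate if necessary, which does not affect the statement) we may assume $G_x=H$. By Lemma~\ref{shortening}, if $\ot(x)$ shortens by a factor $\tfrac1k$, there is a minimal $m$ and an element $g\in G$ with $T^m(x)=g(x)$, and $\ot(x)\cap\og(x)=\mathfrak{O}_{\langle g\rangle}(x)$ has cardinality $k$. The first task is to show that such a $g$ may be chosen in $N_G(H)$ and that its image $\bar g\in N_G(H)/H$ has order exactly $k$; then $k\in\Delta(N_G(H)/H)$, giving (i). Part (ii) then follows from Lemma~\ref{glueing} once we also establish $|\og(x)|=[G:H]$, which is immediate from the Orbit--Stabilizer Theorem since $G_x=H$: indeed $|\og(x)|=[G:G_x]=[G:H]$, and Lemma~\ref{glueing} gives the number of glueing orbits as $|\og(x)|/|\ot(x)\cap\og(x)|=[G:H]/k$.

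For the key step (i), I would argue as follows. Since $T^m(x)=g(x)$ and $T$ commutes with the $G$-action, applying any $h\in H=G_x$ gives $T^m(h(x))=h(g(x))=h(T^m(x))$; but $h(x)=x$, so $T^m(x)=h(g(x))$, i.e. $h g(x)=g(x)$ for all $h\in H$, hence $g^{-1}Hg\subseteq G_x=H$, and by finiteness $g^{-1}Hg=H$, so $g\in N_G(H)$. Next, the coset $gH$ is well defined independent of the choice of $g$ with $T^m(x)=g(x)$ (two such differ by an element of $G_x=H$), so we get a genuine element $\bar g=gH\in N_G(H)/H$. Now $\mathfrak{O}_{\langle g\rangle}(x)=\{x,g(x),g^2(x),\dots\}$ has size $k$ means $g^k(x)=x$ with $k$ minimal, i.e. $g^k\in H$ with $k$ minimal, i.e. $\bar g$ has order exactly $k$ in $N_G(H)/H$. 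Therefore $k=|\langle\bar g\rangle|\in\Delta(N_G(H)/H)$, as claimed. (The degenerate case $|\og(x)|=1$, i.e. $H=G$, gives $k=1$, consistent with $\Delta$ of the trivial group; one should note this.)

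The main obstacle I anticipate is the bookkeeping around well-definedness and minimality: one must be careful that the "shortening factor" $\tfrac1k$ as defined via $|\ot(x)\cap\og(x)|=k$ in Lemma~\ref{shortening} really does match the order of $\bar g$ and not merely divide it, and that the choice of $g$ producing the minimal $m$ is compatible with the choice producing a well-defined coset — these are the same $g$, but spelling this out cleanly requires invoking the minimality clause of Lemma~\ref{shortening} carefully. A secondary subtlety is that $k\in\Delta(N_G(H)/H)$ must be read correctly: $\Delta(N_G(H)/H)$ is the set of orders of cyclic subgroups of the quotient group, and we have exhibited $\bar g$ realizing $k$; since $\bar g\in N_G(H)/H$ by the normalizer argument above, this membership is exactly what is needed. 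Everything else is a direct citation of the two preceding lemmas and Orbit--Stabilizer.
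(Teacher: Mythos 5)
Your proposal is correct and follows essentially the same route as the paper: invoke Lemma~\ref{shortening} to produce $g$ with $T^m(x)=g(x)$, show $g\in N_G(H)$ and that $k$ is the order of $gH$ in $N_G(H)/H$, then combine the Orbit--Stabilizer Theorem with Lemma~\ref{glueing} for (ii). The only cosmetic difference is that you verify $g\in N_G(H)$ directly from the commutation relation, whereas the paper uses the already-noted fact that stabilizers are constant along periodic orbits ($H=G_x=G_{g(x)}=gHg^{-1}$); these are the same observation.
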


\begin{proof}
Let $ x\in X_H $, for some $ H\leq G $. (i) Suppose $ |\ot(x)|=km $ and $ |\ot(x)\cap\og(x)|=k $, for some $ k $ and $ m $, so that $ \ot(x) $ shortens in length by a factor of $ \frac{1}{k} $. Then by Lemma \ref{shortening}, 
\[
|\ot(x)\cap \og(x)|= \{x,g(x),g^2(x),\ldots, g^{k-1}(x)\}\,,
\]
where $ g(x)=T^m(x) $ and where $ k $ is minimal such that $ g^k(x)=x $. Then $ H=G_x=G_{g(x)}=gHg^{-1} $, so that $ g\in N_G(H) $. Further, $ g^k\in G_x=H $. Then $ k $ is the order of the coset $ gH $ in $ N_G(H)/H $. Hence, for any $ x\in X_H $ (and therefore $ x\in X_{[H]} $),
\begin{equation*}
|\ot(x)\cap\og(x)|\in \Delta\left(N_G(H)/H\right).
\end{equation*}

(ii) By the Orbit--Stabilizer--Theorem, we have
\begin{flalign*}
|\mathfrak{O}_G(x)| =[G:G_x] = [G: H].
\end{flalign*}
Then combining the above with Lemma \ref{glueing} and (i) the result follows.
\end{proof}


Now, let $ [H]\in\overline{P}(G) $, and define
\[ 
\Sigma_{[H]}=\left\{(\delta,\theta): \delta\in \Delta(N_G(H)/H), \theta=\tfrac{[G:H]}{\delta}\right\},
\]
Fix $ I=\{1\} $ to be the trivial subgroup of $ G $, so that
\[
\Sigma_{[I]}=\left\{(\delta,\theta): \delta\in \Delta(G), \theta=\tfrac{|G|}{\delta}\right\}.
\]
\noindent For $ \sigma=(\delta_\sigma,\theta_\sigma)\in\Sigma_{[H]} $, write $ \mathcal{O}^{[H]_\sigma}_n(T) $ for the set of orbits of length $ n $ in $ \mathbb{X}_{[H]} $ such that 
\[
|\ot(x)\cap\og(x)| = \delta_\sigma 
\qquad\text{and}\qquad \tfrac{[G:H]}{\delta_\sigma}=\theta_\sigma. 
\]
Then
\begin{itemize}
\item[(i)]
for $ \sigma\in\Sigma_{[H]} $ such that $ \delta_{\sigma}=1 $ and $ \theta_{\sigma}=[G:H]=1 $, $ \mathcal{O}^{[H]_\sigma}_n(T) $ is the set of surviving orbits of length $ n $ in $ \mathbb{X}_{[H]} $,
\item[(ii)]
for $ \sigma\in\Sigma_{[H]} $ such that $ \delta_{\sigma}=1 $ and $ \theta_{\sigma}= [G:H] >1 $, $ \mathcal{O}^{[H]_\sigma}_n(T) $ is the set of orbits of length $ n $ in $ \mathbb{X}_{[H]} $ which glue by a factor of $ \theta_{\sigma} $, and 
\item[(iii)]
for $ \sigma\in\Sigma_{[H]} $ such that $ \delta_{\sigma} >1 $, $ \mathcal{O}^{[H]_\sigma}_n(T) $ is the set of orbits of length $ n $ in $ \mathbb{X}_{[H]} $ which shorten in length by a factor of $ \tfrac{1}{\delta_\sigma} $ and glue by a factor of $ \theta_{\sigma} $. 
\end{itemize}
Hence, by \eqref{PartitioningPeriodicSet}, we obtain the disjoint union
\[
\mathcal{O}_n(T)= \bigsqcup_{[H]\in\overline{P}(G)} \,\bigsqcup_{\sigma\in\Sigma_{[H]}} 
  \mathcal{O}^{[H]_\sigma}_n(T).
  \]
It follows that
\begin{equation}
\label{TopOrbitDecomposition}
O_n(T)= \sum_{[H]\in\overline{P}(G)}\, \sum_{\sigma\in\Sigma_{[H]}} O^{[H]_\sigma}_n(T),   
\end{equation}
for all $ n\geq 1 $. Further, by the surjectivity of the map given in \eqref{orbitsurjectivity}, we have that 
\[
\mathcal{O}_n(T')= \bigsqcup_{[H]\in\overline{P}(G)}\, \bigsqcup_{\sigma\in\Sigma_{[H]}} 
   \pi\left( \mathcal{O}^{[H]_\sigma}_{\delta_\sigma n}(T)\right)
= \bigsqcup_{[H]\in\overline{P}(G)}\, \bigsqcup_{\sigma\in\Sigma_{[H]}} 
  \bigl( \tfrac{1}{\theta_\sigma}\bigr) \mathcal{O}^{[H]_\sigma}_{\delta_\sigma n}(T).
\]
It follows that 
\begin{equation}
\label{BottomOrbitDecomposition}
\end{equation}
\[
O_n(T')=  \sum_{[H]\in\overline{P}(G)}\, \sum_{\sigma\in\Sigma_{[H]}} 
\bigl(\tfrac{1}{\theta_\sigma}\bigr)O^{[H]_\sigma}_{\delta_\sigma n}(T), 
\]
for all $ n\geq 1 $. Here, we understand that we have the following constraints:
\begin{flalign}
\label{Constraint1}
&\theta_\sigma \mid O^{[H]_\sigma}_n(T);\\
\label{Constraint2}
&O^{[H]_\sigma}_n(T)=0 \text{ if } \delta_\sigma \nmid n.
\end{flalign}


\begin{example}
Let $ T $ be the doubling map defined on the two-torus $ X=\mathbb{R}^2/\mathbb{Z}^2 $. Let $ (\xr,\tr) $ be the quotient of $ (X,T) $ under the action of the dihedral group
\[
D_8 =\{1,\alpha,\alpha^2,\alpha^3,\tau,\alpha\tau,\alpha^2\tau,\alpha^3\tau \},
\]
where we define
\begin{flalign*}
\alpha(x,y)= (1-y,x)\qquad\text{and}\qquad\tau(x,y)=(y,x)
\end{flalign*}
for all $ (x,y)\in X $, so that $ \alpha $ acts on $ X $ through rotation by $ \frac{\pi}{2} $ and $ \tau $ acts through reflection. We have the following five conjugacy classes such that $ X_{[H]}\neq\emptyset $:
\begin{flalign*}
[G]&= \{D_8\}; \\
[\Omega]&= \{\{1,\alpha^2,\alpha\tau, \alpha^3\tau \}\}; \\
[\Phi]&= \{\{1,\alpha^3\tau\},\{1,\alpha\tau\}\}; \\
[\Pi]&= \{\{1,\tau\},\{1,\alpha^2\tau\}\}; \\
[I]&= \{\{1\}\},
\end{flalign*}
The partition pieces $ X_{[H]} $ of $ X $ and $ \xr_{[H]} $ of $ \xr $ are shown in Figure \ref{f3} and Figure \ref{f4}, respectively, where the coloured points give the indicated set.
\begin{figure}[h]
\begin{center}
\begin{tikzpicture}
\draw (0,0)--(0,2)--(2,2)--(2,0)--(0,0);
\draw (1,0)--(1,2);
\draw (0,1)--(2,1);
\filldraw[blue] (0,0) circle [radius=0.075];
\filldraw[blue] (1,1) circle [radius=0.075];
\node[above] at (1,2.25) {$ X_{[G]} $};
\node[below] at (0,-0.1) {$ 0 $};
\node[below] at (1,-0.1) {$ \frac{1}{2} $};
\node[below] at (2,-0.1) {$ 1 $};
\node[left] at (0,1) {$ \frac{1}{2} $};
\node[left] at (0,2) {$ 1 $};
\draw (2.5,0)--(2.5,2)--(4.5,2)--(4.5,0)--(2.5,0);
\draw (3.5,0)--(3.5,2);
\draw (2.5,1)--(4.5,1);
\filldraw[blue] (2.5,1) circle [radius=0.075];
\filldraw[blue] (3.5,0) circle [radius=0.075];
\node[above] at (3.5,2.25) {$ X_{[\Omega]} $};
\node[below] at (2.5,-0.1) {$ 0 $};
\node[below] at (3.5,-0.1) {$ \frac{1}{2} $};
\node[below] at (4.5,-0.1) {$ 1 $};
\node[left] at (2.5,1) {$ \frac{1}{2} $};
\node[left] at (2.5,2) {$ 1 $};
\draw (5,2)--(7,2)--(7,0);
\draw[blue, very thick] (5,0)--(5,2);
\draw[blue, very thick] (6,0)--(6,2);
\draw[blue, very thick] (5,1)--(7,1);
\draw[blue, very thick] (5,0)--(7,0);
\filldraw[white] (5,0) circle [radius=0.075];
\draw[black] (5,0) circle [radius=0.075];
\filldraw[white] (6,0) circle [radius=0.075];
\draw[black] (6,0) circle [radius=0.075];
\filldraw[white] (7,0) circle [radius=0.075];
\draw[black] (7,0) circle [radius=0.075];
\filldraw[white] (7,1) circle [radius=0.075];
\draw[black] (7,1) circle [radius=0.075];
\filldraw[white] (6,1) circle [radius=0.075];
\draw[black] (6,1) circle [radius=0.075];
\filldraw[white] (6,2) circle [radius=0.075];
\draw[black] (6,2) circle [radius=0.075];
\filldraw[white] (5,1) circle [radius=0.075];
\draw[black] (5,1) circle [radius=0.075];
\filldraw[white] (5,2) circle [radius=0.075];
\draw[black] (5,2) circle [radius=0.075];
\node[above] at (6,2.25) {$ X_{[\Phi]} $};
\node[below] at (5,-0.1) {$ 0 $};
\node[below] at (6,-0.1) {$ \frac{1}{2} $};
\node[below] at (7,-0.1) {$ 1 $};
\node[left] at (5,1) {$ \frac{1}{2} $};
\node[left] at (5,2) {$ 1 $};
\draw (7.5,0)--(7.5,2)--(9.5,2)--(9.5,0)--(7.5,0);
\draw[blue, very thick] (7.5,0)--(9.5,2);
\draw[blue, very thick] (7.5,2)--(9.5,0);
\draw[dotted] (7.5,1)--(9.5,1);
\draw[dotted] (8.5,0)--(8.5,2);
\filldraw[white] (7.5,0) circle [radius=0.075];
\draw[black] (7.5,0) circle [radius=0.075];
\filldraw[white] (9.5,0) circle [radius=0.075];
\draw[black] (9.5,0) circle [radius=0.075];
\filldraw[white] (7.5,2) circle [radius=0.075];
\draw[black] (7.5,2) circle [radius=0.075];
\filldraw[white] (9.5,2) circle [radius=0.075];
\draw[black] (9.5,2) circle [radius=0.075];
\filldraw[white] (8.5,1) circle [radius=0.075];
\draw[black] (8.5,1) circle [radius=0.075];
\node[above] at (8.5,2.25) {$ X_{[\Pi]} $};
\node[below] at (7.5,-0.1) {$ 0 $};
\node[below] at (8.5,-0.1) {$ \frac{1}{2} $};
\node[below] at (9.5,-0.1) {$ 1 $};
\node[left] at (7.5,1) {$ \frac{1}{2} $};
\node[left] at (7.5,2) {$ 1 $};
\filldraw[blue] (10.06,0.05)-- (10.06,1.95) -- (11.95,1.95)--(11.95,0.06) -- (10.05,0.06);
\draw[black] (10,0)--(10,2)--(12,2)--(12,0)--(10,0);
\draw[white, thick] (10.05,0.05) -- (11.95,1.95);
\draw[white, thick] (10.05,1.95) -- (11.95,0.05);
\draw[white, thick] (10.05,1) -- (11.97,1);
\draw[white, thick] (11,0.03) -- (11,1.95);
\node[above] at (11,2.25) {$ X_{[I]} $};
\node[below] at (10,-0.1) {$ 0 $};
\node[below] at (11,-0.1) {$ \frac{1}{2} $};
\node[below] at (12,-0.1) {$ 1 $};
\node[left] at (10,1) {$ \frac{1}{2} $};
\node[left] at (10,2) {$ 1 $};
\end{tikzpicture}
\end{center}
\caption{}
\label{f3}
\end{figure}
\begin{figure}[h]
\begin{center}
\begin{tikzpicture}
\draw (0,0)--(1.5,1.5)--(1.5,0)--(0,0);
\filldraw[blue] (0,0) circle [radius=0.075];
\filldraw[blue] (1.5,1.5) circle [radius=0.075];
\node[above] at (0.75,1.75) {$ \widehat{X}_{[G]} $};
\node[below] at (0,-0.1) {$ 0 $};
\node[below] at (1.5,-0.05) {$ \frac{1}{2} $};
\node[right] at (1.5,1.5) {$ \frac{1}{2} $};
\draw (2.5,0)--(4,1.5)--(4,0)--(2.5,0);
\filldraw[blue] (4,0) circle [radius=0.075];
\node[above] at (3.25,1.75) {$ \widehat{X}_{[\Omega]} $};
\node[below] at (2.5,-0.1) {$ 0 $};
\node[below] at (4,-0.05) {$ \frac{1}{2} $};
\node[right] at (4,1.5) {$ \frac{1}{2} $};
\draw (5,0)--(6.5,1.5);
\draw[blue, very thick] (6.5,1.5)--(6.5,0)--(5,0);
\filldraw[white] (5,0) circle [radius=0.075];
\draw[black] (5,0) circle [radius=0.075];
\filldraw[white] (6.5,0) circle [radius=0.075];
\draw[black] (6.5,0) circle [radius=0.075];
\filldraw[white] (6.5,1.5) circle [radius=0.075];
\draw[black] (6.5,1.5) circle [radius=0.075];
\node[above] at (5.75,1.75) {$ \widehat{X}_{[\Phi]} $};
\node[below] at (5,-0.1) {$ 0 $};
\node[below] at (6.5,-0.05) {$ \frac{1}{2} $};
\node[right] at (6.5,1.5) {$ \frac{1}{2} $};
\draw (7.5,0)--(9,0)--(9,1.5);
\draw[blue, very thick] (7.5,0)--(9,1.5);
\filldraw[white] (7.5,0) circle [radius=0.075];
\draw[black] (7.5,0) circle [radius=0.075];
\filldraw[white] (9,1.5) circle [radius=0.075];
\draw[black] (9,1.5) circle [radius=0.075];
\node[above] at (8.25,1.75) {$ \widehat{X}_{[\Pi]} $};
\node[below] at (7.5,-0.1) {$ 0 $};
\node[below] at (9,-0.05) {$ \frac{1}{2} $};
\node[right] at (9,1.5) {$ \frac{1}{2} $};
\filldraw[blue] (10.15,0.05)-- (11.425,1.35) -- (11.425,0.05)--(10.15,0.05);
\draw[black] (10,0)--(11.5,1.5)--(11.5,0)--(10,0);
\node[above] at (10.75,1.75) {$ \widehat{X}_{[I]} $};
\node[below] at (10,-0.1) {$ 0 $};
\node[below] at (11.5,-0.05) {$ \frac{1}{2} $};
\node[right] at (11.5,1.5) {$ \frac{1}{2} $};
\end{tikzpicture}
\end{center}
\caption{}
\label{f4}
\end{figure}
Noting that $ \widehat{\mathbb{X}}_{[\Omega]}=\emptyset $, we have the following four orbit behaviours:
\begin{flalign}
\label{TriangleMapOrbitbehaviour}
\begin{cases}
O^{[G]}_n(\T) = & O_n^{[G]_{(1,1)}}(T); \\
O_n^{[\Phi]}(\tr) = &\tfrac{1}{4}\, O^{[\Phi]_{(1,4)}}_n(T)+ \tfrac{1}{2}\,O^{[\Phi]_{(2,2)}}_{2n}(T);  \\
O_n^{[\Pi]}(\tr) = &\tfrac{1}{4}\, O^{[\Pi]_{(1,4)}}_n(T)+ \tfrac{1}{2}\,O^{[\Pi]_{(2,2)}}_{2n}(T);\\
O^{[I]}_n(\tr) = &\tfrac{1}{8}\, O^{[I]_{(1,8)}}_n(T)+ \tfrac{1}{4}\,O^{[I]_{(2,4)}}_{2n}(T)+ \tfrac{1}{2}\,O^{[I]_{(4,2)}}_{4n}(T).
\end{cases}
\end{flalign}
Then 
\begin{flalign*}
O_n(\tr)=\, O^{[G]}_n(\tr)+ O^{[\Phi]}_n(\tr) +  O^{[\Pi]}_n(\tr) + O^{[I]}_n(\tr),
\end{flalign*}
for all $ n\geq 1 $. Now, recall from Example \ref{ExampleCircleDoublingMap} that 
\begin{equation}
\label{TriangleMapAsymptoticGrowthRates}
F_n(T)\sim F_n(\tr)\quad\text{as $ n\to\infty $},
\end{equation}
showing the same asymptotic growth rates for orbits of $ (X,T) $ and its quotient system $ (\xr,\tr) $. By \eqref{TriangleMapOrbitbehaviour} and \eqref{TriangleMapAsymptoticGrowthRates}, we can deduce that the proportion of orbits which shorten in length is asymptotically zero. For a proof, the reader may refer to \cite{Zegowitz}.
\end{example}


We have the following bounds for periodic points and orbits in quotient systems:

\begin{lemma}
\label{UpperAndLowerBounds}
Let $ (X',T') $ be the quotient system of $ (X,T) $ under the action of a finite group $ G $. Then, for any $ n\geq 1 $, 
\begin{itemize}
\item[($ \mathscr{B}_1 $)] $ \frac{F_n(T)}{|G|}\leq F_n(T')\leq F_n(T)+ \sum\limits_{\substack{\sigma\in\Sigma_{[I]}\\ \delta_\sigma >1}} \bigl( \frac{1}{\delta_\sigma \theta_\sigma}\bigr) F_{\delta_\sigma n}(T) $;   
\item[($ \mathscr{B}_2 $)] $ O_n(T')\leq O_n(T)+ \sum\limits_{\substack{\sigma\in\Sigma_{[I]}\\ \delta_\sigma >1}} \bigl( \frac{1}{\theta_\sigma}\bigr)O_{\delta_{\sigma}n}(T) $, and if $ n $ is such that $ \delta_{\sigma}\nmid n $, for any $ \sigma\in\Sigma_{[I]} $ such that $ \delta_\sigma >1 $, then $ O_n(T')\geq \frac{O_n(T)}{|G|} $.
\end{itemize}
\end{lemma}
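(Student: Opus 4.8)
The two inequalities in each part follow by bookkeeping from the orbit decompositions \eqref{TopOrbitDecomposition} and \eqref{BottomOrbitDecomposition}, together with the constraints \eqref{Constraint1} and \eqref{Constraint2}, so I would organize the argument around those identities. For the period-point version ($\mathscr{B}_1$), I would first note the analogous decompositions for $F_n$: the factor map $\pi$ restricted to $\mathcal{F}_n(T)$ is surjective onto $\mathcal{F}_n(T')$ with fibres of size at most $|G|$ (the fibre over $\mathfrak{O}_G(x)$ is $\mathfrak{O}_G(x)$ itself, which has size $[G:G_x]\le|G|$), giving the lower bound $F_n(T')\ge F_n(T)/|G|$ immediately. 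Actually, to get it cleanly I would count: a point $\mathfrak{O}_G(x)\in X'$ has $T'$-period dividing $n$ exactly when some $g\in G$ satisfies $T^n(x)=g(x)$; sorting these by the stabilizer class $[H]$ and by the shortening factor $\delta_\sigma$ (as in Lemma \ref{orbitbehaviour}) and dividing by the glueing factor $\theta_\sigma$ yields $F_n(T')=\sum_{[H]}\sum_{\sigma\in\Sigma_{[H]}}\frac{1}{\delta_\sigma\theta_\sigma}F^{[H]_\sigma}_{\delta_\sigma n}(T)$. Since every $[H]$ contributes a $\sigma$ with $\delta_\sigma=1$ and $\theta_\sigma=[G:H]\le|G|$, and since $\sum_{[H],\sigma}F^{[H]_\sigma}_n(T)=F_n(T)$ by \eqref{PartitioningPeriodicSet}, the lower bound drops out; isolating the $[H]=[I]$ terms with $\delta_\sigma>1$ and bounding every other $\frac{1}{\delta_\sigma\theta_\sigma}F^{[H]_\sigma}_{\delta_\sigma n}(T)$ above by $F^{[H]_\sigma}_{\delta_\sigma n}(T)\le F_n(T)$ — here one uses that points of $T$-period $\delta_\sigma n$ lying in the relevant piece inject into $\mathcal{F}_n(T)$ via $x\mapsto$ the point they shorten onto — gives the upper bound in ($\mathscr{B}_1$).

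For ($\mathscr{B}_2$), the upper bound is read directly off \eqref{BottomOrbitDecomposition}: writing $O_n(T')=\sum_{[H]}\sum_{\sigma}\frac{1}{\theta_\sigma}O^{[H]_\sigma}_{\delta_\sigma n}(T)$, I separate out the surviving and purely-glueing terms (those with $\delta_\sigma=1$), whose sum is $\sum_{[H]}\sum_{\delta_\sigma=1}\frac{1}{\theta_\sigma}O^{[H]_\sigma}_{n}(T)\le\sum_{[H]}\sum_{\delta_\sigma=1}O^{[H]_\sigma}_n(T)\le O_n(T)$ using \eqref{Constraint1} to see $\frac{1}{\theta_\sigma}O^{[H]_\sigma}_n(T)$ is an integer $\le O^{[H]_\sigma}_n(T)$, and bound the remaining terms, those with $\delta_\sigma>1$. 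The subtlety is that the shortening terms with $\delta_\sigma>1$ can occur for non-trivial $[H]$ as well as for $[I]$, so I must argue those extra pieces are themselves absorbed: an orbit in $\mathbb{X}_{[H]}$ of $T$-length $\delta_\sigma n$ that shortens corresponds, upon applying $g$ as in Lemma \ref{shortening}, to a configuration also witnessed in the $[I]$-stratum after forgetting the stabilizer — more carefully, I would bound $\sum_{[H]\ne[I]}\sum_{\delta_\sigma>1}\frac{1}{\theta_\sigma}O^{[H]_\sigma}_{\delta_\sigma n}(T)$ by noting each such orbit contributes periodic points whose full $T$-orbit of length $\delta_\sigma n$ already appears among the orbits counted on the right, so the total is at most $\sum_{\sigma\in\Sigma_{[I]},\,\delta_\sigma>1}\frac{1}{\theta_\sigma}O_{\delta_\sigma n}(T)$. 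This re-indexing step — showing the non-trivial-stabilizer shortening contributions are dominated by the $[I]$-indexed sum on the right-hand side — is the main obstacle, and I expect it to require either the hypothesis that $\Delta(N_G(H)/H)\subseteq\Delta(G)$ (true since $N_G(H)/H$ embeds into... no — rather since any cyclic quotient order in $N_G(H)/H$ is realized by an element order in $G$ after lifting) together with $\theta$-matching, or a direct injection of the relevant orbit sets.

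For the lower bound in ($\mathscr{B}_2$), suppose $n$ is such that $\delta_\sigma\nmid n$ for every $\sigma\in\Sigma_{[I]}$ with $\delta_\sigma>1$. I claim the same divisibility failure holds for every $\sigma\in\Sigma_{[H]}$ with $\delta_\sigma>1$: indeed $\delta_\sigma\in\Delta(N_G(H)/H)$, and every such order divides $|N_G(H)/H|$ which divides $|G|$, and is the order of a cyclic subgroup, hence is realized as $|\langle h\rangle|$ for some $h\in G$ — so $\delta_\sigma\in\Delta(G)$, i.e. $\delta_\sigma=\delta_{\sigma'}$ for some $\sigma'\in\Sigma_{[I]}$, whence $\delta_\sigma\nmid n$. (I should double-check this cyclic-order claim; if it fails in general I would instead restrict the statement's hypothesis appropriately, but the paper's framing suggests it holds.) Then by \eqref{Constraint2} every term with $\delta_\sigma>1$ in \eqref{BottomOrbitDecomposition} evaluated at this $n$ vanishes, so $O_n(T')=\sum_{[H]}\sum_{\delta_\sigma=1}\frac{1}{\theta_\sigma}O^{[H]_\sigma}_n(T)\ge\frac{1}{|G|}\sum_{[H]}\sum_{\delta_\sigma=1}O^{[H]_\sigma}_n(T)=\frac{1}{|G|}O_n(T)$, the last equality because when $\delta_\sigma\nmid n$ for all shortening $\sigma$, \eqref{Constraint2} also forces $O^{[H]_\sigma}_n(T)=0$ for all $\delta_\sigma>1$, so the surviving-plus-glueing terms account for all of $O_n(T)$ via \eqref{TopOrbitDecomposition}. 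Assembling these four estimates completes the proof.
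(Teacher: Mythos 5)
Your route is the same as the paper's (the stratified decompositions \eqref{TopOrbitDecomposition} and \eqref{BottomOrbitDecomposition} together with \eqref{Constraint1} and \eqref{Constraint2}), and the parts you complete are correct and essentially identical to the paper's argument: the lower bound in ($\mathscr{B}_1$) via fibres of $\pi$ of cardinality at most $|G|$, and the conditional lower bound in ($\mathscr{B}_2$), where your explicit use of $\Delta(N_G(H)/H)\subseteq\Delta(G)$ is a point the paper leaves tacit; the containment is true, but prove it by lifting (if $gH$ has order $d$ in $N_G(H)/H$ then $d\mid\operatorname{ord}(g)$, so $g^{\operatorname{ord}(g)/d}$ has order $d$ in $G$), not via ``$d$ divides $|G|$'', which alone proves nothing. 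The gaps are in the two upper bounds, and in both cases they sit exactly where you flag them: the shortening contributions from strata $[H]\neq[I]$. In ($\mathscr{B}_1$) your proposed repair fails: there is no injection of the period-$\delta_\sigma n$ points that shorten into $\mathcal{F}_n(T)$ (``the point they shorten onto'' lives in $X'$, not in $X$), and the asserted inequality $F^{[H]_\sigma}_{\delta_\sigma n}(T)\leq F_n(T)$ is false in general. In ($\mathscr{B}_2$) you leave the re-indexing step open, and rightly so: for $\sigma\in\Sigma_{[H]}$ with $H\neq I$ one has $\theta_\sigma=[G:H]/\delta_\sigma<|G|/\delta_\sigma$, so $\tfrac{1}{\theta_\sigma}$ exceeds the coefficient $\tfrac{\delta_\sigma}{|G|}$ available in the $\Sigma_{[I]}$-indexed sum on the right-hand side; the domination you need points the wrong way.

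In fact no argument can close this gap, because the stated upper bounds fail once orbits with non-trivial stabilizers shorten. Take $G=C_2\times C_2=\{1,a,b,ab\}$ acting on a single $T$-orbit $\{x_0,\dots,x_{2n-1}\}$ of length $2n$ (with $T(x_i)=x_{i+1}$, indices mod $2n$) by letting $a$ act trivially and $b,ab$ act by $x_i\mapsto x_{i+n}$: every point has stabilizer $H=\langle a\rangle$, the orbit shortens by $\delta=2$ and glues by $\theta=[G:H]/2=1$, so $O_n(T')=1$ while $O_n(T)+\sum_{\sigma\in\Sigma_{[I]},\,\delta_\sigma>1}\tfrac{1}{\theta_\sigma}O_{\delta_\sigma n}(T)=0+\tfrac12$, and $F_n(T')=n$ while the ($\mathscr{B}_1$) bound gives $\tfrac{n}{2}$ (adding two $T$-fixed points swapped by $a$ makes the action faithful and still violates both bounds for large $n$). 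The paper's own proof glosses over precisely this step: it treats every shortening point as lying in a fibre of cardinality $\delta_\sigma\theta_\sigma=|G|$ and every shortening orbit as glueing by $|G|/\delta_\sigma$, which is only valid on the $[I]$-stratum, and its final inequality in ($\mathscr{B}_2$) silently replaces $\tfrac{1}{\theta_\sigma}$ for $[H]\neq[I]$ by the smaller $\tfrac{\delta_\sigma}{|G|}$. So your identification of the ``main obstacle'' is exactly right; to get a correct statement one must either assume shortening occurs only at points with trivial stabilizer (for instance, a free action on the periodic set), keep stratum-dependent coefficients $\tfrac{1}{[G:H]}$, or weaken the bounds (for example, replace $\tfrac{1}{\theta_\sigma}$ by $1$ and $\tfrac{1}{\delta_\sigma\theta_\sigma}$ by $\tfrac{1}{\delta_\sigma}$).
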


\begin{proof}
($ \mathscr{B}_1 $) The lower bound for $ F_n(T') $ comes from the fact that the fibres of the topological factor map $ \pi $ have cardinality at most $ |G| $. Since $ \pi $ maps $ \mathcal{F}_n(T) $ to $ \mathcal{F}_n(T') $, we deduce that $ \frac{F_n(T)}{|G|}\leq F_n $. The bound is achieved whenever all orbits of length dividing $ n $ glue together by a factor of $ | G| $ and, for all $ \sigma\in\Sigma_{[I]} $ such that $ \delta_\sigma >1 $, no orbits of length $ \delta_\sigma n $ shorten in length by a factor of $ \frac{1}{\delta_\sigma} $. 

For the upper bound, note that if $ x $ is a point of period $ \delta_\sigma n $ for some $ \delta_\sigma >1 $, then $ \pi(x)\in\mathcal{F}_n(T') $ if and only if $ \ot(x) $ shortens in length by a factor of $ \frac{1}{\delta_\sigma} $ and glues by a factor of $ \theta_\sigma $, in which case $ x $ lies in a fibre of cardinality $ \delta_\sigma\theta_\sigma $. Then
\begin{flalign*}
F_n(T') &= \left|\mathcal{F}_n(T')\cap \pi\left(\mathcal{F}_n(T)\right)\right| +
\bigl|  \mathcal{F}_n(T')\cap \bigl[ \sum_{\substack{\sigma\in\Sigma_{[I]}\\ \delta_\sigma >1}} \pi( \mathcal{F}_{\delta_\sigma n}(T))\bigr]\bigr| \\
&\leq F_n(T) + \sum_{\substack{\sigma\in\Sigma_{[I]}\\ \delta_\sigma >1}} \bigl( \tfrac{1}{\delta_\sigma \theta_\sigma} \bigr) F_{\delta_\sigma n}(T).
\end{flalign*}

$ (\mathscr{B}_2) $ For the upper bound, we have
\begin{flalign*}
O_n(T') & = \sum_{[H]\in\overline{P}(G)}\sum_{\sigma\in\Sigma_{[H]}} \bigl(\tfrac{1}{\theta_\sigma}\bigr) O^{[H]_\sigma}_{\delta_\sigma n}(T) \\
&= \sum_{[H]\in\overline{P}(G)} \biggl[\,\,
\sum_{\substack{\sigma\in\Sigma_{[H]}\\ \delta_\sigma\theta_\sigma=1}} O^{[H]_\sigma}_n(T) + \sum_{\substack{\sigma\in\Sigma_{[H]}\\ \delta_\sigma=1,\theta_\sigma >1}} \bigl( \tfrac{1}{\theta_\sigma}\bigr) O^{[H]_\sigma}_n(T) + \sum_{\substack{\sigma\in\Sigma_{[H]}\\ \delta_\sigma>1}} \bigl(\tfrac{1}{\theta_\sigma}\bigr) O^{[H]_\sigma}_{\delta_\sigma n}(T)\biggr]\\
&\leq    \sum_{[H]\in\overline{P}(G)} \sum_{\sigma\in\Sigma_{[H]}}
\biggl[ O^{[H]_\sigma}_n(T) + \bigl( \tfrac{1}{\theta_\sigma}\bigr) O^{[H]_\sigma}_{\delta_\sigma n}(T)\biggr] \\
&\leq O_n(T) + \sum_{\substack{\sigma\in\Sigma_{[I]}\\ \delta_\sigma >1}} \bigl(\tfrac{1}{\theta_\sigma}\bigr) O_{\delta_\sigma n}(T),
\end{flalign*}
by \eqref{TopOrbitDecomposition} and \eqref{BottomOrbitDecomposition}. The bound is achieved whenever all orbits of length $ n $ survive and, for all $ \sigma\in\Sigma_{[I]} $ such that $ \delta_\sigma > 1 $, all orbits of length $ \delta_\sigma n $ shorten in length by a factor of $ \frac{1}{\delta_\sigma} $ and glue by a factor of $ \theta_\sigma $. 

Now, suppose that $ n $ is such that $ \delta_\sigma\nmid n $, for any $ \sigma\in\Sigma_{[I]} $ such that $ \delta_\sigma > 1 $. Then by \eqref{TopOrbitDecomposition} and the constraint given in \eqref{Constraint2}, we have that
\begin{equation}
\label{lowerboundorbits}
O_n(T) =  \sum_{[H]\in\overline{P}(G)} \biggl[\,\, \sum_{\substack{\sigma\in\Sigma_{[H]}\\ \delta_\sigma\theta_\sigma=1}} O^{[H]_\sigma}_n(T)+ \sum_{\substack{\sigma\in\Sigma_{[H]}\\ \delta_\sigma=1,\theta_\sigma>1}} O^{[H]_\sigma}_n(T) \biggr]
\end{equation}
Then by \eqref{BottomOrbitDecomposition}, \eqref{lowerboundorbits}, and the fact that $ \theta\leq |G| $, we have that the lower bound for $ (\mathscr{B}_2) $ is given by
\begin{flalign*}
O_n(T') &=  \sum_{[H]\in\overline{P}(G)} \sum_{\sigma\in\Sigma_{[H]}} \bigl(\tfrac{1}{\theta_\sigma}\bigr) O^{[H]_\sigma}_{\delta_\sigma n}(T) \\
&= \sum_{[H]\in\overline{P}(G)}  \biggl[\,\, \sum_{\substack{\sigma\in\Sigma_{[H]}\\ \delta_\sigma\theta_\sigma=1}} O^{[H]_\sigma}_n(T)
+ \sum_{\substack{\sigma\in\Sigma_{[H]}\\ \delta_\sigma=1,\theta_\sigma >1}} \bigl(\tfrac{1}{\theta_\sigma}\bigr) O^{[H]_\sigma}_n(T)
+ \sum_{\substack{\sigma\in\Sigma_{[H]}\\ \delta_\sigma >1}} \bigl(\tfrac{1}{\theta_\sigma}\bigr) O^{[H]_\sigma}_{\delta_\sigma n}(T)\biggr] \\
&\geq \sum_{[H]\in\overline{P}(G)} \biggl[\,\, \sum_{\substack{\sigma\in\Sigma_{[H]}\\ \delta_\sigma\theta_\sigma=1}} O^{[H]_\sigma}_n(T)
+ \sum_{\substack{\sigma\in\Sigma_{[H]}\\ \delta_\sigma=1, \theta_\sigma >1}} \bigl(\tfrac{1}{|G|}\bigr) O^{[H]_\sigma}_n(T) \biggr] \\
&\geq    \frac{1}{|G|} \sum_{[H]\in\overline{P}(G)} \biggl[\,\, \sum_{\substack{\sigma\in\Sigma_{[H]}\\ \delta_\sigma\theta_\sigma=1}} O^{[H]_\sigma}_n(T) + \sum_{\substack{\sigma\in\Sigma_{[H]}\\ \delta_\sigma=1,\theta_\sigma >1}} O^{[H]_\sigma}_n(T) \biggr] \\
&= \frac{O_n(T)}{|G|}.
\end{flalign*}
Again, the bound is achieved whenever all orbits of length $ n $ glue together by a factor of $ |G| $ and, for $ \sigma\in\Sigma_{[I]} $ such that $ \delta_\sigma >1 $, no orbits of length $ \delta_\sigma n $ shorten in length by a factor of $ \frac{1}{\delta_\sigma} $. 
\end{proof}

\begin{remark}
Note that the upper bound for $ (\mathscr{B}_1) $ is not optimal but sufficient for the purpose of this paper. 
\end{remark}

\begin{remark}
Lemma 8 does not give a lower bound for $ (\mathscr{B}_2) $ since, in that case, we have only the trivial lower bound $ O_n(T')\geq 0 $. This lower bound is achieved whenever all orbits of length $ n $ shorten in length and, for $ \sigma\in\Sigma_{[I]} $ such that $ \delta_\sigma >1 $, no orbits of length $ \delta_\sigma n $ shorten in length by a factor of $ \frac{1}{\delta_\sigma} $. 
\end{remark}

Now, let
\[
\nabla=\max\{\delta:\delta\in\Delta(G)\}
\]
be the largest order of any element in $ G $. Denote by $ H_\nabla $ a maximal subgroup of $ G $ such that $ \nabla\in \Delta(N_G(H_\nabla)/H_\nabla) $, and set
\[
\Theta= \frac{[G:H_\nabla]}{\nabla}
\]
to be the minimum value for $ \theta_\sigma $ such that $ \sigma= (\nabla,\theta_\sigma) $. Then Lemma \ref{UpperAndLowerBounds} gives an immediate result concerning the logarithmic growth rate in $ F_n(T') $:

\begin{corollary}
\label{CorollaryLimSup}
Let $ (X',T') $ be the quotient system of the topological dynamical system $ (X,T) $ under the action of a finite group $ G $, and suppose there exists $ \eta >0 $ such that $ \limsup\limits_{n\to\infty} \frac{1}{n}\log F_n(T)=\eta $. Then 
\[
\eta \leq \limsup\limits_{n\to\infty} \frac{1}{n}\log F_n(T') \leq \nabla \eta.
\]
\end{corollary}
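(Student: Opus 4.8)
The plan is to derive both inequalities directly from the bounds in Lemma~\ref{UpperAndLowerBounds}, part~$(\mathscr{B}_1)$, by taking $\limsup \frac{1}{n}\log(\cdot)$ through them. Recall that $(\mathscr{B}_1)$ states
\[
\frac{F_n(T)}{|G|}\leq F_n(T')\leq F_n(T)+ \sum_{\substack{\sigma\in\Sigma_{[I]}\\ \delta_\sigma >1}} \Bigl( \tfrac{1}{\delta_\sigma \theta_\sigma}\Bigr) F_{\delta_\sigma n}(T).
\]
For the lower bound $\eta \leq \limsup_{n\to\infty}\frac{1}{n}\log F_n(T')$, I would use the left-hand inequality: since $F_n(T')\geq F_n(T)/|G|$, we get $\frac{1}{n}\log F_n(T')\geq \frac{1}{n}\log F_n(T) - \frac{1}{n}\log|G|$, and as $\frac{1}{n}\log|G|\to 0$, taking $\limsup$ on both sides yields $\limsup_{n\to\infty}\frac{1}{n}\log F_n(T')\geq \limsup_{n\to\infty}\frac{1}{n}\log F_n(T)=\eta$. (One must be slightly careful when $F_n(T)=0$ for some $n$, but since the bound is $F_n(T)\le |G|F_n(T')$ this is harmless; the $\limsup$ is unaffected by terms where $F_n(T)=0$.)

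For the upper bound $\limsup_{n\to\infty}\frac{1}{n}\log F_n(T') \leq \nabla\eta$, I would work with the right-hand inequality. The sum is over the finite index set $\{\sigma\in\Sigma_{[I]}:\delta_\sigma>1\}$, and each $\delta_\sigma$ satisfies $\delta_\sigma\leq \nabla$ by definition of $\nabla$ as the largest element order in $G$. Fix $\varepsilon>0$; by definition of $\eta$ as the $\limsup$, there is $M$ with $F_m(T)\leq e^{(\eta+\varepsilon)m}$ for all $m\geq M$. Then for $n$ large enough that $\delta_\sigma n\geq M$ for every relevant $\sigma$, each term $F_{\delta_\sigma n}(T)\leq e^{(\eta+\varepsilon)\delta_\sigma n}\leq e^{(\eta+\varepsilon)\nabla n}$ (using $\eta+\varepsilon\geq 0$, which holds since $\eta\ge 0$ as $F_n(T)$ are non-negative integers). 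The term $F_n(T)$ is likewise bounded by $e^{(\eta+\varepsilon)n}\le e^{(\eta+\varepsilon)\nabla n}$. Hence $F_n(T')\leq \bigl(1+\sum_{\sigma}\tfrac{1}{\delta_\sigma\theta_\sigma}\bigr)e^{(\eta+\varepsilon)\nabla n}=:C\,e^{(\eta+\varepsilon)\nabla n}$ with $C$ a fixed constant. Taking $\frac{1}{n}\log$ and then $\limsup$ gives $\limsup_{n\to\infty}\frac{1}{n}\log F_n(T')\leq (\eta+\varepsilon)\nabla$, and letting $\varepsilon\to 0$ gives $\leq\nabla\eta$.

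There is no serious obstacle here; the statement is an essentially formal consequence of $(\mathscr{B}_1)$. The only points requiring a little care are bookkeeping ones: the index set in the sum is finite (it is a subset of $\Sigma_{[I]}$, which is finite since $G$ is finite), so the constant $C$ and the threshold on $n$ are genuinely uniform; one should note $\eta\ge 0$ so that replacing $\delta_\sigma$ by its upper bound $\nabla$ in the exponent only increases the bound; and one should handle the harmless possibility that individual $F_n(T)$ vanish. I would write the argument out in the two short paragraphs above, making the $\varepsilon$–$M$ estimate explicit for the upper bound and the one-line monotonicity estimate explicit for the lower bound.
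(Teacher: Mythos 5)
Your proposal is correct and follows essentially the same route as the paper: both deduce the corollary from part $(\mathscr{B}_1)$ of Lemma~\ref{UpperAndLowerBounds}, using monotonicity of $\limsup$ for the lower bound and an $\varepsilon$-argument with $\delta_\sigma\leq\nabla$ for the upper bound. If anything, your step of bounding each term $F_{\delta_\sigma n}(T)\leq e^{(\eta+\varepsilon)\delta_\sigma n}$ individually is slightly more careful than the paper's, which replaces every $F_{\delta_\sigma n}(T)$ by $F_{\nabla n}(T)$ even though periodic point counts need not be monotone in that way.
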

\begin{proof}
Let $ \epsilon >0 $. Then there exists $ N>0 $ such that when $ n>N $, we have that
\begin{equation}
\label{limsupdefinition}
\eta-\epsilon < \tfrac{1}{n}\log  F_n(T) < \eta+\epsilon.                                       
\end{equation}
By Lemma \ref{UpperAndLowerBounds} and \eqref{limsupdefinition}, we obtain
\begin{flalign*}
\tfrac{1}{n} \log F_n(T')   &\geq   \tfrac{1}{n} \log\left( \tfrac{F_n(T)}{|G|}\right) 
> \eta+ \epsilon - \tfrac{1}{n} \log |G|.
\end{flalign*}
It follows that
\begin{equation}
\label{limsuplowerbound}
\limsup_{n\to\infty} \tfrac{1}{n}\log F_n(T') \geq \eta.                                          
\end{equation}

Further, for the upper bound, we have 
\begin{flalign*}
\tfrac{1}{n}\log F_n(T') &\leq  \tfrac{1}{n} \log\biggl( F_n(T)+ \sum_{\substack{\sigma\in\Sigma_{[I]}\\ \delta_\sigma >1}} \bigl(\tfrac{1}{\delta_\sigma \theta_\sigma}\bigr) F_{\delta_\sigma n}(T) \biggr) 
\leq \tfrac{1}{n} \log\left(\tfrac{ |\Sigma_{[I]}|}{\nabla\Theta} \right) 
+ \tfrac{1}{n} \log F_{\nabla n}(T) \\
&< \tfrac{1}{n} \log\left(\tfrac{|\Sigma_{[I]}|}{\nabla\Theta}\right) + \nabla (\eta+\epsilon),
\end{flalign*}
by Lemma \ref{UpperAndLowerBounds} and \eqref{limsupdefinition}. It follows that
\begin{equation}
\label{limsupupperbound}
\limsup_{n\to\infty} \tfrac{1}{n}\log F_n(T') \leq \nabla \eta.                              
\end{equation}
Combining \eqref{limsuplowerbound} and \eqref{limsupupperbound}, the result follows.  
\end{proof}
Later, in Corollary \ref{CorollaryOrbitGrowthRates}, we will show that any growth rate in between the bounds of Corollary \ref{CorollaryLimSup} can be achieved.

\section{The Super Basic Lemma}

We now observe that if we are free to choose a pair of topological dynamical systems, then \eqref{Constraint1} and \eqref{Constraint2} are the only constraints on the orbit behaviour under a finite group action. We have the following result:
\begin{proposition}
\label{SuperBasicLemma}
Let $ G $ be a finite group, and let $ \bigl(b_n^{[H]_\sigma}\bigr)_{n=1}^\infty $ be a sequence of non-negative integers, for $ [H]\in\overline{P}(G) $ and $ \sigma\in\Sigma_{[H]} $, such that $ b^{[G]_{(1,1)}}_1 \geq 1 $. Define $ \bigl(a_n^{[H]_\sigma}\bigr)_{n=1}^\infty $ by
\[
a_n^{[H]_\sigma}=
\begin{cases}
\theta_\sigma \,b^{[H]_\sigma}_{n/ \delta_\sigma}, & \text{if } \delta_\sigma\mid n, \\
0, & \text{otherwise}.
\end{cases}
\]
Further, define
\begin{flalign*}
a_n&= \sum_{[H]\in\overline{P}(G)}\,\sum_{\sigma\in\Sigma_{[H]}} a_n^{[H]_\sigma},
\\
b_n&= \sum_{[H]\in\overline{P}(G)}\,\sum_{\sigma\in\Sigma_{[H]}} b_n^{[H]_\sigma},
\end{flalign*}
for all $ n\geq 1 $. Then there exist a topological dynamical system $ (X,T) $ and an action of $ G $ on $ X $ which commutes with $ T $ such that
\[
O_n(T)=a_n 
\qquad\text{and}\qquad
O_n(T')=b_n,
\]
for all $ n\geq 1 $.
\end{proposition}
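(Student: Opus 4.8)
The plan is to build $X$ as a disjoint union of explicit finite ``gadgets'', one family for each triple $([H],\sigma,m)$ with $[H]\in\overline{P}(G)$, $\sigma=(\delta_\sigma,\theta_\sigma)\in\Sigma_{[H]}$ and $m\ge1$, included with multiplicity $b_m^{[H]_\sigma}$, and then to topologise this union so that it is a compact metric space to which the quotient construction of Section~2 applies. Since $a_n=\sum_{[H],\sigma}a_n^{[H]_\sigma}$ with $a_n^{[H]_\sigma}=\theta_\sigma b^{[H]_\sigma}_{n/\delta_\sigma}$ when $\delta_\sigma\mid n$ and $0$ otherwise, and $b_n=\sum_{[H],\sigma}b_n^{[H]_\sigma}$, it suffices (in view of \eqref{TopOrbitDecomposition} and \eqref{BottomOrbitDecomposition}, but one can also just count directly) to produce, for each $([H],\sigma,m)$, a gadget whose $T$-orbits are exactly $\theta_\sigma$ orbits of length $\delta_\sigma m$, all lying in $\mathbb{X}_{[H]}$ and of type $\sigma$ (so $|\ot(x)\cap\og(x)|=\delta_\sigma$), which glue to a single orbit of length $m$ downstairs; taking $b_m^{[H]_\sigma}$ disjoint copies and summing over $([H],\sigma,m)$ then gives $O_n(T)=a_n$ and $O_n(T')=b_n$ for all $n$.

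For the gadget, fix a representative $H$ of $[H]$ and an element $g\in N_G(H)$ whose image $gH$ has order $\delta_\sigma$ in $N_G(H)/H$; this is possible precisely because $\delta_\sigma\in\Delta(N_G(H)/H)$. Take the underlying set to be $\Gamma=(G/H)\times(\Z/m\Z)$, let $G$ act by left translation on the first coordinate, and define $T(aH,j)=(aH,j+1)$ for $0\le j\le m-2$ and $T(aH,m-1)=(ag^{-1}H,0)$. One checks: $T$ is a well-defined bijection (well-definedness uses $g\in N_G(H)$, so $g^{-1}Hg=H$); $T$ commutes with the $G$-action; the stabiliser of $(aH,j)$ is $aHa^{-1}\in[H]$, so $\Gamma\subseteq\mathbb{X}_{[H]}$; every $T$-orbit has length exactly $\delta_\sigma m$, since the second coordinate runs once through $\Z/m\Z$ per lap while the coset is multiplied by $g^{-1}$ each lap, and $g^{\delta_\sigma}\in H$ with no smaller positive power of $g$ in $H$; hence there are $[G:H]\,m/(\delta_\sigma m)=\theta_\sigma$ of them; each such orbit meets the $G$-orbit $(G/H)\times\{j\}$ of one of its points in exactly the $\delta_\sigma$ cosets $aH,ag^{-1}H,\dots,ag^{-(\delta_\sigma-1)}H$, so $|\ot\cap\og|=\delta_\sigma$ and the orbit has type $\sigma$; and finally $G\backslash\Gamma\cong\Z/m\Z$ with $T'$ the shift, a single $T'$-orbit of length $m$. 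Thus $\Gamma=\Gamma([H],\sigma,m)$ is exactly the gadget required; note that when $\delta_\sigma=\theta_\sigma=1$ (forced iff $[H]=[G]$) it degenerates to a single common fixed point of $T$ and $G$.

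It remains to assemble the (countably many) gadget copies. Because $a_1^{[G]_{(1,1)}}=b_1^{[G]_{(1,1)}}\ge1$, at least one of them is the trivial one-point gadget $\Gamma([G],(1,1),1)$; use one such copy as a limit point and arrange the remaining gadgets to accumulate only at it — for instance realise the union as a compact subset of $[0,1]$ with this point at $0$ and the $i$-th remaining gadget, rescaled to have diameter $<2^{-i}$, placed inside the interval $(2^{-i-1},2^{-i})$. The resulting $X$ is compact metric; $T$ and $G$ act continuously and commute, since they do so on each gadget and fix the limit point, and continuity at the limit point follows because each gadget is $T$- and $G$-invariant with diameter and distance to the limit point tending to $0$; and $F_n(T)<\infty$, since a point of period $n$ can only lie in a gadget $\Gamma([H],\sigma,m)$ with $\delta_\sigma m=n$, of which there are finitely many (finitely many $[H]$ and $\sigma$, and then $m=n/\delta_\sigma$ is determined). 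Hence $(X',T')$ is a genuine quotient system, and counting orbits gadget by gadget — the gadgets are pairwise disjoint clopen-up-to-the-limit-point pieces, each preserved by $T$ and $G$ — yields $O_n(T)=\sum_{[H],\sigma:\,\delta_\sigma\mid n}\theta_\sigma b^{[H]_\sigma}_{n/\delta_\sigma}=a_n$ and $O_n(T')=\sum_{[H],\sigma}b_n^{[H]_\sigma}=b_n$, as required.

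The main obstacle is the construction and verification of the single gadget $\Gamma([H],\sigma,m)$: arranging the twisted shift $T$ to be well defined and $G$-equivariant while producing $T$-orbits of exactly the prescribed length and exactly the prescribed intersection pattern with the $G$-orbits is where all the group-theoretic bookkeeping (the roles of $N_G(H)$, the coset $gH$ of order $\delta_\sigma$, and the factorisation $[G:H]=\delta_\sigma\theta_\sigma$) must be made to fit together. The compactification and the final tally are routine, and the whole scheme specialises to the $C_2$ construction of \cite{Halving}.
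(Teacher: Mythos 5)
Your construction is correct and is essentially the paper's own proof: both build an explicit disjoint union of finite blocks, one per $([H],\sigma,m)$ with multiplicity $b_m^{[H]_\sigma}$, each contributing $\theta_\sigma$ orbits of length $\delta_\sigma m$ upstairs and one orbit of length $m$ downstairs, and then compactify by letting the blocks accumulate at a fixed point whose existence is guaranteed by $b_1^{[G]_{(1,1)}}\geq 1$. The only (cosmetic) difference is where the twist sits: you take $(G/H)\times\Z/m\Z$ with the natural left-translation action and twist $T$ by $g^{-1}$ at the wraparound, while the paper uses the untwisted shift on $\Z/n\Z$ and instead twists the $G$-action via the decomposition $gs=s'h_\sigma^l h'$ — your version makes the stabilizers and equivariance somewhat more transparent.
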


Note that setting $ G= C_2 $ recovers the basic lemma of \cite{Halving}.

\begin{proof}
Let $ H $ be a representative for $ [H]\in \overline{P}(G) $, and suppose $ h_\sigma\in N_G(H) $ is such that $ h_\sigma H $ has order $ \delta_\sigma $ in $ N_G(H)/ H $. Then letting $ S_{[H]} $ be a set of representatives for the coset space $ G/ N_G(H) $, we have that
\[
|S_{[H]}|= \theta_\sigma. 
\]

We define
\[
X= \bigsqcup_{n\geq 1} X_n
\]
where $ X_n $ is the union of closed orbits of length $ n $, and set 
\[
X^{[H]_\sigma}_n= S_{[H]}\times \bigl\{1,2, \ldots, b^{[H]_\sigma}_{n/ \delta_\sigma} \bigr\}\times \mathbb{Z}/ n\mathbb{Z},
\]
and
\[
X_n=  \bigsqcup_{[H]\in\overline{P}(G)}\, \bigsqcup_{\sigma\in\Sigma_{[H]}} X_n^{[H]_\sigma},
\]
where we understand that $ \bigl\{1,2,\ldots, b^{[H]_\sigma}_{n/ \delta_\sigma}\bigr\}=\emptyset $ if $ b^{[H]_\sigma}_{n/ \delta_\sigma}=0 $. 

We note that for $ s\in S_{[H]} $, we can write $ gs= s' h^l_\sigma h' $, where $ s'\in S_{[H]} $ is uniquely determined, $ l\in\N_0 $ is taken modulo $ \delta_\sigma $, and $ h'\in H $. Then, for $ g\in G $ and $ x=(s,i,m)\in X $, we can define an action of $ G $ on $ X $ as follows:
\[
g(x)=\bigl( s', i, m+ \tfrac{l n}{\delta_\sigma}\bigr),
\]
for all $ x\in X $. Further, we define $ T:X\to X $ by
\[
T(x)=(s, i, m+1\,(\text{mod } n)),
\]
for all $ x\in X $. We check that $ g $ indeed satisfies all criteria to be an action of $ G $ which commutes with $ T $. Hence, by construction, it follows that
\[
O_n(T)= a_n
\qquad\text{and}\qquad
O_n(T')= b_n,
\]
for all $ n\geq 1 $, where $ (X',T') $ is the quotient of $ (X, T) $ under the action of $ G $.

It remains to show that $ X $ can be given the structure of a metric space with respect to which $ T $ is a homeomorphism. First, we take a point in $ X^{[G]_{(1,1)}}_1 $ (which is non-empty by construction) and call this point $ \infty $. We then define a metric for $ x\in X_n $ with $ x\neq \infty $ and $ x'\in X_{n'} $ by
\[
d(x,x')=
\begin{cases}
0, & \text{if } x=x', \\
\frac{1}{n}, & \text{if } x'=\infty, \\
\frac{1}{\min\{n,n'\}}, & \text{otherwise} .
\end{cases}
\]
Then given any open set $ U $ such that $ \infty\in U $, there exists $ N\geq 1 $ such that $ \bigsqcup\limits_{n\geq N} X_n \subseteq U $. Then $ X\backslash U= \bigsqcup\limits_{n<N} X_n $ is finite. It follows that $ X $ is compact. Moreover, since $ T $ preserves each set $ X_n $ and the point $ \infty $, we have that $ T $ is a homeomorphism. 
\end{proof}

\section{Orbit Growth Rates in Quotient Systems}
\label{SectionOrbitGrowthRates}

Proposition \ref{SuperBasicLemma} shows that any two sequences $ (a_n) $ and $ (b_n) $ of non-negative integers which satisfy the combinatorial constraints \eqref{Constraint1} and \eqref{Constraint2} arise as the orbit count of a pair of topological dynamical systems related to each other by a finite group action. However, it is possible to impose conditions directly on the sequences $ (a_n) $ and $ (b_n) $ to guarantee that the combinatorial constraints are satisfied. The following result gives some sufficient conditions:
\begin{theorem}
\label{SequencesProposition}
Let $ G $ be a finite group and let $ (a_n) $ be a sequence of non-negative integers with $ a_1\geq 1 $ such that there exists $ N>0 $ with $ \frac{a_{\nabla n}}{\Theta}\geq a_n $ for $ n\geq N $. Further, let $ (b_n) $ be any sequence of non-negative integers such that $ b_1> \frac{a_1}{|G|} $ and
\[
\begin{cases}
\frac{a_n}{|G|}\leq b_n \leq a_n, & \text{for $ n<N $}, \\
a_n\leq b_n \leq \frac{a_{\nabla n}}{\Theta}, & \text{for $ n\geq N $}.
\end{cases}
\]
Then there exist a topological dynamical system $ (X,T) $ and an action of $ G $ on $ X $ which commutes with $ T $ such that
\[
O_n(T)=a_n\qquad\text{and}\qquad O_n(T')=b_n,
\]
for all $ n\geq 1 $.
\end{theorem}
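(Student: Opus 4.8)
The plan is to deduce Theorem~\ref{SequencesProposition} from the Super Basic Lemma (Proposition~\ref{SuperBasicLemma}) by distributing the given orbit counts $a_n$ and $b_n$ across the pieces $\mathcal{O}^{[H]_\sigma}_n(T)$ in a way that (a) reproduces the totals $a_n$ and $b_n$, and (b) respects the divisibility bookkeeping built into the statement of Proposition~\ref{SuperBasicLemma} (namely that $a_n^{[H]_\sigma}=\theta_\sigma b^{[H]_\sigma}_{n/\delta_\sigma}$ when $\delta_\sigma\mid n$ and vanishes otherwise). The key point is that we get to choose the $b^{[H]_\sigma}_m$ freely as long as they are non-negative integers with $b^{[G]_{(1,1)}}_1\ge 1$; everything else is then forced, and the content of the theorem is that the hypotheses on $(a_n),(b_n)$ make such a choice possible.

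First I would isolate three distinguished pieces among the $\Sigma_{[H]}$: the surviving piece $[G]_{(1,1)}$ (with $\delta=\theta=1$), a ``glueing'' piece $\sigma_0\in\Sigma_{[I]}$ with $\delta_{\sigma_0}=1$ and $\theta_{\sigma_0}=|G|$ (which exists since $(1,|G|/1)\in\Sigma_{[I]}$), and a ``maximal shortening'' piece $\sigma_1=(\nabla,\Theta)\in\Sigma_{[H_\nabla]}$ coming from the definitions of $\nabla$, $H_\nabla$ and $\Theta$ just above the theorem. The idea is to use only these pieces and set all other $b^{[H]_\sigma}_m\equiv 0$. Then, writing $s_n=b^{[G]_{(1,1)}}_n$ for surviving orbits, $g_n=b^{[I]_{\sigma_0}}_n$ for glued orbits, and $t_n=b^{[H_\nabla]_{\sigma_1}}_n$ for maximally-shortened orbits, the formulas of Proposition~\ref{SuperBasicLemma} give
\[
a_n = s_n + |G|\,g_n + \Theta\,t_{n/\nabla}\mathbf{1}_{\nabla\mid n},
\qquad
b_n = s_n + g_n + t_n .
\]
So the combinatorial task reduces to choosing non-negative integers $s_n,g_n,t_n$ (with $s_1\ge 1$) satisfying these two identities for every $n$.

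Next I would exhibit the choice. For $n<N$ I would take $t_n=0$, so that $a_n=s_n+|G|g_n$ and $b_n=s_n+g_n$; subtracting, $g_n=(a_n-b_n)/(|G|-1)$ is determined, but I instead want integer solutions, so I would argue directly: the constraint $a_n/|G|\le b_n\le a_n$ is exactly what is needed to write $a_n$ as $s_n+|G|g_n$ with $s_n=b_n-g_n\ge 0$ and $s_n+g_n=b_n$; concretely set $g_n=\lfloor (a_n-b_n)/(|G|-1)\rfloor$ and absorb the remainder into $s_n$, or — more robustly — observe that the set of pairs $(s,g)$ of non-negative integers with $s+|G|g=a_n$ has the $b$-values $s+g$ ranging over an interval of integers that (because consecutive $b$-values differ by $|G|-1$) need not hit $b_n$ exactly, so one in fact needs a third degree of freedom. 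This is the first place I expect friction, and the honest fix is to also allow a ``mild shortening/glueing'' piece; but in the regime $n<N$ one can sidestep it entirely by noting $|G|\in\Delta(G)$ forces the existence of a cyclic $g$ of order $|G|$ only in special cases, so I would instead handle $n<N$ by the same trick used for $n\ge N$ with $\nabla$ replaced by a divisor — see below. For $n\ge N$ the constraint is $a_n\le b_n\le a_{\nabla n}/\Theta$. Here write $n$ uniquely and distribute: for indices $m$ with $\nabla\mid m$ we have the extra term $\Theta t_{m/\nabla}$ available in $a_m$ but \emph{not} in $b_m$. So fix $m\ge N$; I would set $s_m=a_m$, $g_m=0$, and push the surplus $b_m-a_m\ge 0$ into $t_m$, i.e. $t_m=b_m-a_m$; this forces, for the index $\nabla m$, the equation $a_{\nabla m}=s_{\nabla m}+|G|g_{\nabla m}+\Theta t_m=s_{\nabla m}+\Theta(b_m-a_m)$, and the hypothesis $b_m\le a_{\nabla m}/\Theta$ rearranges to $\Theta(b_m-a_m)\le a_{\nabla m}-\Theta a_m\le a_{\nabla m}$, giving $s_{\nabla m}=a_{\nabla m}-\Theta t_m\ge 0$ provided we have not already committed $s_{\nabla m}$ — which we would have, if $\nabla m\ge N$. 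So the construction is genuinely recursive/iterative along the chains $m\mapsto \nabla m\mapsto \nabla^2 m\mapsto\cdots$, and one must check the contributions telescope without going negative.

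The main obstacle, then, is the bookkeeping of these chains: along each arithmetic progression $\{\nabla^j m_0: j\ge 0\}$ with $\nabla\nmid m_0$, the value $s_{\nabla^{j+1}m_0}$ is reduced by $\Theta t_{\nabla^j m_0}$, and one needs the hypotheses (the bound $a_{\nabla n}/\Theta\ge b_n\ge a_n$ for $n\ge N$, which in particular forces $a_{\nabla n}\ge \Theta a_n$, hence the $a$'s along a chain grow at least geometrically with ratio $\Theta$) to guarantee every $s$ stays non-negative. I would organize this by defining $t_n$ first for all $n\ge N$ as $b_n-a_n$ (and $t_n=0$ for $n<N$), then \emph{defining} $s_n := a_n - \Theta t_{n/\nabla}\mathbf{1}_{\nabla\mid n,\, n/\nabla\ge N} - |G|g_n$ with $g_n$ chosen to handle the range $n<\nabla N$ (using the $n<N$ inequality $b_n\ge a_n/|G|$ there) and $g_n=0$ for $n\ge\nabla N$, and then verifying $s_n\ge 0$ for all $n$ and $s_1\ge 1$ (using $a_1\ge 1$ together with $b_1>a_1/|G|$ to ensure we do not accidentally zero out the surviving fixed point), and finally checking $s_n+g_n+t_n=b_n$ and $s_n+|G|g_n+\Theta t_{n/\nabla}\mathbf{1}=a_n$ identically. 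Once the non-negativity and the two identities are confirmed, I apply Proposition~\ref{SuperBasicLemma} with the sequences $b^{[H]_\sigma}$ equal to $s,g,t$ on the three distinguished pieces and $0$ elsewhere, and read off $O_n(T)=a_n$, $O_n(T')=b_n$, completing the proof. I expect the verification that $s_n\ge 0$ on the overlap region $N\le n<\nabla N$ — where both a $g_n$ correction (from the small-$n$ regime) and a $t_{n/\nabla}$ correction might in principle appear — to require the most care, and I would resolve it by choosing $N$-boundary conventions so that at most one correction is active at each $n$.
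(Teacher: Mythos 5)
Your strategy is the same as the paper's: restrict to the three pieces $[G]_{(1,1)}$, $[I]_{(1,|G|)}$ and $[H_\nabla]_{(\nabla,\Theta)}$, solve the two identities $b_n=s_n+g_n+t_n$ and $a_n=s_n+|G|\,g_n+\Theta\,t_{n/\nabla}$ in non-negative integers with $s_1\ge 1$, and then invoke Proposition~\ref{SuperBasicLemma}. However, as written the proposal has two concrete gaps. First, in the regime $n\ge N$ your final bookkeeping takes $t_n=b_n-a_n$ and $s_n=a_n-\Theta t_{n/\nabla}-|G|\,g_n$; then at any index $n\ge\nabla N$ with $\nabla\mid n$ and $t_{n/\nabla}>0$ one gets $s_n+g_n+t_n=b_n-\Theta t_{n/\nabla}-(|G|-1)g_n<b_n$, so the $b$-identity you promise to ``check identically'' in fact fails. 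The repair is the recursive choice the paper makes: $g_n=0$, $s_n=a_n-\Theta t_{n/\nabla}$, $t_n=b_n-s_n$ (so $t_n=b_n-a_n+\Theta t_{n/\nabla}$, not $b_n-a_n$); both identities then hold by construction, and non-negativity follows by a one-line induction along the chains $m\mapsto\nabla m$: if $s_m\ge 0$ then $t_m\le b_m$, hence $\Theta t_m\le\Theta b_m\le a_{\nabla m}$ and $s_{\nabla m}\ge 0$. This is essentially the telescoping you gesture at but never carry through, and you flag rather than resolve the overlap region $N\le n<\nabla N$.

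Second, for $n<N$ you correctly notice that with only the pieces $(1,1)$ and $(1,|G|)$ the system $s_n+|G|\,g_n=a_n$, $s_n+g_n=b_n$ forces $(|G|-1)\mid(a_n-b_n)$, but your proposed fixes (``replace $\nabla$ by a divisor'', ``choose $N$-boundary conventions'') are never made precise, so this half of the theorem is simply not proved; your aside that the glueing piece might require an element of order $|G|$ is also a red herring, since $(1,|G|)\in\Sigma_{[I]}$ exists for every $G$. For comparison, the paper's own proof takes $t_n=0$, $g_n=\lceil(a_n-b_n)/(|G|-1)\rceil$, $s_n=b_n-g_n$ here, which secures integrality and non-negativity but reproduces $a_n$ exactly only when $(|G|-1)\mid(a_n-b_n)$ --- so the friction you identified is genuine and is glossed over in the paper as well; a complete argument must either absorb the remainder using further pieces (for instance shortening pieces at index $n$, whose cost then reappears in $a_{\nabla n}$ and must be tracked) or impose an extra divisibility hypothesis. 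As it stands, your write-up leaves the $n<N$ case open and the $n\ge N$ bookkeeping incorrect, so it is not yet a proof.
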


\begin{proof}
We recursively construct sequences $ \bigl(b_n^{[G]_{(1,1)}}\bigr)_{n= 1}^\infty $, $ \bigl(b_n^{[I]_{(1,|G|)}}\bigr)_{n= 1}^\infty $, and $ \bigl(b_n^{[H_\nabla]_{(\nabla,\Theta)}}\bigr)_{n= 1}^\infty $ of non-negative integers such that 
\begin{flalign*}
b_n &= b^{[G]_{(1,1)}}_n + b^{[I]_{(1,|G|)}}_n + b^{[H_\nabla]_{(\nabla, \Theta)}}_n, \\
a_n &=  b^{[G]_{(1,1)}}_n + |G|\,b^{[I]_{(1,|G|)}}_n + \Theta\, b^{[H_\nabla]_{(\nabla, \Theta)}}_{n/ \nabla},
\end{flalign*}
for all $ n\geq 1 $, where we understand that $ b^{[H_\nabla]_{(\nabla, \Theta)}}_{n/ \nabla}=0 $ if $\nabla\nmid n $. Now, let $ n<N $, and choose
\[
\begin{cases}
b^{[H_\nabla]_{(\nabla, \Theta)}}_n =0, \\
b^{[I]_{(1, |G|)}}_n = \left\lceil \tfrac{a_n-b_n}{|G|-1}\right\rceil, \\
b^{[G]_{(1,1)}}_n = b_n- b^{[I]_{(1,|G|)}}_n,
\end{cases}
\]
and $ b^{[G]_{(1,1)}}_1 \geq 1 $ (since $ b_1 > \frac{a_1}{|G|} $). Note that the above are non-negative integers by definition. 

Next, let $ n\geq N $, and choose
\[
\begin{cases}
b^{[I]_{(1, |G|)}}_n=0, \\
b^{[G]_{(1,1)}}_n= a_n- \Theta\, b^{[H_\nabla]_{(\nabla, \Theta)}}_{n/ \nabla}, \\
b^{[H_\nabla]_{(\nabla, \Theta)}}_n= b_n- b^{[G]_{(1,1)}}_n.
\end{cases}
\]
Again, the above are non-negative integers by definition. Applying Proposition \ref{SuperBasicLemma}, the result follows. 
\end{proof}

As a consequence of Theorem \ref{SequencesProposition}, we have the following result concerning asymptotic growth rates for orbits which shows that any growth rate in between the bounds of Corollary \ref{CorollaryLimSup} can be achieved.

\begin{corollary}
\label{CorollaryOrbitGrowthRates}
Suppose $ 1<\lambda $, $ \eta $, and $ c>0 $ are such that either
\[
\begin{cases}
\eta= \lambda\text{ and } c\geq \frac{1}{|G|}, \text{ or} \\
\lambda < \eta < \lambda^{\nabla}, \text{ or} \\
\eta= \lambda^{\nabla} \text{ and }c\leq \frac{1}{\Theta}.
\end{cases}
\]
Then there exist a topological dynamical system $ (X,T) $ and an action of $ G $ on $ X $ which commutes with $ T $ such that
\begin{flalign*}
&\,\, O_n(T) \sim \lambda^n \quad\text{as } n\to\infty,\\
&O_n(T')\sim c\eta^n\quad\text{as } n\to\infty.
\end{flalign*}
\end{corollary}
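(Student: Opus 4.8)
The plan is to deduce the result from Theorem~\ref{SequencesProposition}, except in one boundary regime where Proposition~\ref{SuperBasicLemma} is applied directly; in each case one produces a sequence $(a_n)$ of non-negative integers with $a_n\sim\lambda^n$ and a sequence $(b_n)$ with $b_n\sim c\eta^n$ meeting the relevant hypotheses, and the system furnished by those results then satisfies $O_n(T)=a_n$ and $O_n(T')=b_n$. One may assume $G$ is non-trivial, so that $\nabla\geq 2$ and $|G|\geq 2$: if $G$ is trivial the hypotheses force $\eta=\lambda$, $c=1$, and $T'=T$, and any $(X,T)$ with $O_n(T)\sim\lambda^n$ (take $a_n=b_n=\lfloor\lambda^n\rfloor$ in Proposition~\ref{SuperBasicLemma}) suffices. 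Two facts will be used freely: changing $(a_n),(b_n)$ at finitely many indices affects neither asymptotic; and the cross-index conditions of Theorem~\ref{SequencesProposition} ($a_{\nabla n}/\Theta\geq a_n$, and $b_n\leq a_{\nabla n}/\Theta$) restrict only indices $n\geq N$, with $N$ at our disposal.

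First I would treat the cases in which $b_n\geq a_n$ for all large $n$, namely $\lambda<\eta<\lambda^\nabla$, or $\eta=\lambda$ with $c\geq 1$, or $\eta=\lambda^\nabla$ with $c\leq 1/\Theta$. Fix a large threshold $N$ and set $a_1=b_1=1$; $a_n=|G|$, $b_n=1$ for $1<n<N$; and $a_n=\lfloor\lambda^n\rfloor$, $b_n=\lfloor c\eta^n\rfloor$ for $n\geq N$. Then $a_1\geq 1$, $b_1=1>a_1/|G|$, and $a_n/|G|=1\leq b_n\leq a_n$ for $1<n<N$, so the conditions for indices below $N$ hold. For $n\geq N$: because $\nabla\geq 2$ and $\lambda>1$, $\lambda^{\nabla n}$ outgrows $\lambda^n$, giving $a_{\nabla n}/\Theta\geq a_n$ for $n$ large; because $\eta\geq\lambda$ (and $c\geq 1$ when $\eta=\lambda$), $b_n\geq a_n$ for $n$ large; and because $\eta\leq\lambda^\nabla$ (and $c\leq 1/\Theta$ when $\eta=\lambda^\nabla$, using that $\Theta\lfloor x/\Theta\rfloor$ is an integer $\leq x$, hence $\leq\lfloor x\rfloor$), $b_n=\lfloor c\eta^n\rfloor\leq a_{\nabla n}/\Theta$ for $n$ large. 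Choosing $N$ beyond all these thresholds, Theorem~\ref{SequencesProposition} applies.

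It remains to handle $\eta=\lambda$ with $1/|G|\leq c\leq 1$, where $b_n\sim c\lambda^n$ is asymptotically at most $a_n\sim\lambda^n$; since the regime $n\geq N$ of Theorem~\ref{SequencesProposition} forces $b_n\geq a_n$, that theorem does not apply, and I would instead appeal to Proposition~\ref{SuperBasicLemma} with only two orbit types present: surviving orbits of class $[G]_{(1,1)}$ and orbits of class $[I]_{(1,|G|)}$, which glue in $|G|$-tuples. Put every $b_n^{[H]_\sigma}$ equal to zero except $q_n:=b_n^{[G]_{(1,1)}}$ and $p_n:=b_n^{[I]_{(1,|G|)}}$, so that $a_n=q_n+|G|p_n$ and $b_n=q_n+p_n$. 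For $n\geq 2$ take $p_n=\bigl\lceil\tfrac{(1-c)\lambda^n}{|G|-1}\bigr\rceil$ and $q_n=\bigl\lceil\tfrac{(c|G|-1)\lambda^n}{|G|-1}\bigr\rceil$; these are non-negative integers since $c\leq 1$ and $c\geq 1/|G|$, and a short computation gives $a_n=\lambda^n+O(1)\sim\lambda^n$ and $b_n=c\lambda^n+O(1)\sim c\lambda^n$. Finally set $q_1=1$, $p_1=0$ so that $b_1^{[G]_{(1,1)}}\geq 1$. Proposition~\ref{SuperBasicLemma} then produces $(X,T)$ with $O_n(T)=a_n$ and $O_n(T')=b_n$.

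In every case $O_n(T)=a_n\sim\lambda^n$ and $O_n(T')=b_n\sim c\eta^n$, as desired. The step I expect to require the most thought is the case split: Theorem~\ref{SequencesProposition} is adapted to situations where $O_n(T')$ eventually dominates $O_n(T)$, so the regime $\eta=\lambda$, $c<1$ --- where $O_n(T')$ is asymptotically a proper fraction of $O_n(T)$, realized purely by glueing --- lies outside its reach and must be handled separately through Proposition~\ref{SuperBasicLemma}. The rest is bookkeeping with floor functions; the only points at which the sandwich inequalities carry no asymptotic slack are the endpoints $(\eta,c)=(\lambda,1/|G|)$ and $(\eta,c)=(\lambda^\nabla,1/\Theta)$, and there they still hold thanks to the inequality $\Theta\lfloor x/\Theta\rfloor\leq\lfloor x\rfloor$ together with the fact that, at the first endpoint, the natural choice already forces $a_n$ to be a multiple of $|G|$ with $b_n=a_n/|G|$.
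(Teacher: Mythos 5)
Your argument is correct, and it diverges from the paper's own proof in a way worth recording. The paper proves Corollary~\ref{CorollaryOrbitGrowthRates} by a single uniform appeal to Theorem~\ref{SequencesProposition}, taking $a_n=\lceil\lambda^n\rceil$ for all $n$ and $b_n=\lceil c\lambda^n\rceil$ for $n<N$, $b_n=\lceil c\eta^n\rceil$ for $n\ge N$, and asserting that the hypotheses hold. You follow essentially that route in the regimes $\lambda<\eta<\lambda^{\nabla}$, $\eta=\lambda$ with $c\ge 1$, and $\eta=\lambda^{\nabla}$ with $c\le\frac{1}{\Theta}$ (your constant choices below the threshold are legitimate, since finitely many terms do not affect the asymptotics, and they conveniently sidestep the small-$n$ inequalities when $c\notin[\tfrac{1}{|G|},1]$), but you split off $\eta=\lambda$ with $\tfrac{1}{|G|}\le c\le 1$ and realize it directly from Proposition~\ref{SuperBasicLemma}, using only the classes $[G]_{(1,1)}$ and $[I]_{(1,|G|)}$, i.e.\ a pure surviving/glueing construction with $a_n=q_n+|G|p_n$ and $b_n=q_n+p_n$. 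This split is not cosmetic: Theorem~\ref{SequencesProposition} forces $a_n\le b_n$ for all $n\ge N$, which no sequences with $a_n\sim\lambda^n$ and $b_n\sim c\lambda^n$, $c<1$, can satisfy, so the paper's one-line verification is incomplete precisely in that regime and your direct construction supplies the missing argument; your endpoint checks (for instance $\Theta\lfloor x/\Theta\rfloor\le\lfloor x\rfloor$ at $\eta=\lambda^{\nabla}$, $c=\tfrac{1}{\Theta}$) are likewise carried out more carefully than in the paper. In short, the paper's proof is shorter and uniform where it applies, while yours is a little longer but actually covers every parameter combination allowed by the statement. One small correction to your reduction: for trivial $G$ the hypotheses do not force $c=1$ (the first case only gives $c\ge 1$ and the third $c\le 1$), and the corollary as stated is in fact false for trivial $G$ with $c\ne 1$; like the paper, you should simply read the statement as implicitly assuming $G$ non-trivial, which is exactly what your assumption $\nabla\ge 2$, $|G|\ge 2$ does.
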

Note that setting $ G=C_2 $ recovers Corollary 10 in \cite{Halving}.

\begin{proof}
Let $ N $ be such that $ c\eta^N < \frac{\lambda^{\nabla N}}{\Theta} $ and define sequences $ (b_n) $ and $ (a_n) $ such that 
\[
a_n= \left\lceil \lambda^n \right\rceil
\quad\text{and}\quad
b_n= 
\begin{cases}
\left\lceil c\lambda^n \right\rceil, & \text{if $ n<N $}, \\
\left\lceil c\eta^n \right\rceil, & \text{if $ n\geq N $},
\end{cases}
\]
for $ n\geq 1 $. Hence, we satisfy the hypothesis of Theorem \ref{SequencesProposition}. The result follows. 
\end{proof}

\section{Concluding Remarks and Questions}

\begin{enumerate}
\item In \cite{WardPuri}, it was observed that for any sequence $ (a_n) $ of non-negative integers there exists a topological dynamical system $ (X,T) $ such that $ O_n(T)= a_n $, for all $ n\geq 1 $. This result was extended by Windsor in \cite{Windsor}, showing $ (X,T) $ to have a smooth model, that is, there exists $ (X,T) $ with $ T $ a $ C^\infty $ diffeomorphism of the two-torus such that $ O_n(T)= a_n $, for all $ n\geq 1 $. Then given sequences $ (a_n) $ and $ (b_n) $ as in the hypothesis of Proposition \ref{SuperBasicLemma}, is there a smooth model $ (X,T) $ and an action of a finite group $ G $ on $ X $ which commutes with $ T $ such that $ O_n(T)= a_n $ and $ O_n(T')= b_n $, for all $ n\geq 1 $?

\item
Example \ref{ExampleCircleDoublingMap} and Example \ref{ExampleTriangleMap} both exhibit the same asymptotic growth rates for orbits of the dynamical system and its quotient, and therefore, they are not representative for the general case of Proposition \ref{SuperBasicLemma} which gives a wide but restricted range of growth rates which can be achieved in the quotient system. We observe that the proof for Proposition \ref{SuperBasicLemma} uses a more abstract combinatorial construction of $ (X,T) $ very unlike the natural constructions of Example \ref{ExampleCircleDoublingMap} and Example \ref{ExampleTriangleMap}. Both examples occur in a natural setting: They have easy to define spaces and maps and the groups chosen are natural (non-trivial) choices for the given space and map. Do examples occurring in a natural setting, for example, classes of systems like group automorphisms, subshifts of finite type, expanding maps on an interval, always exhibit the same growth rates for orbits in the quotient system?
\end{enumerate}

\section*{Acknowledgements}
This work is an extension of the author's PhD thesis supervised by Prof. Shaun Stevens and Prof. Tom Ward, and it was supported by the Swedish Research Council (VR Grant 2010/5905). The author would like to thank Prof. Shaun Stevens and Prof. Tom Ward for their valuable feedback concerning this work. 

\bibliographystyle{plain}
\bibliography{ref}

\end{document}